\newtheorem{clm}{Claim}
\newtheorem{theorem}{Theorem}
\newtheorem{proposition}{Proposition}
\newtheorem{lemma}{Lemma}
\newtheorem{remark}{Remark}
\newcommand{\dir}{\text{Dir}}
\newcommand{\gd}{\delta}
\newcommand{\gep}{\epsilon}       
\newcommand{\hF}{\hat{F}}
\newcommand{\hX}{\hat{X}}
\newcommand{\tgt}{\tilde{\theta}}
\newcommand{\tgh}{\tilde{\eta}}
\newcommand{\tgo}{\tilde{\omega}}
\newcommand{\tk}{\tilde{k}}
\newcommand{\tr}{\tilde{r}}
\newcommand{\tD}{\tilde{D}}
\newcommand{\gG}{\Gamma}
\newcommand{\go}{\omega}
\newcommand{\gO}{\Omega}
\newcommand{\gt}{\theta}
\renewcommand{\ge}{\geq}
\renewcommand{\le}{\leq}
\newcommand{\RR}{\mathbb{R}}
\newcommand{\ZZ}{\mathbb{Z}}
\newcommand{\QQ}{\mathbb{Q}}
\newcommand{\mG}{\mathcal{G}}
\newcommand{\mH}{\mathcal{H}}
\newcommand{\mC}{\mathcal{C}}
\newcommand{\mF}{\mathcal{F}}
\newcommand{\mT}{\mathcal{T}}
\newcommand{\mE}{\mathcal{E}}
\newcommand{\mN}{\mathcal{N}}
\newcommand{\mV}{\mathcal{V}}
\newcommand{\mB}{\mathcal{B}}
\newcommand{\bP}{\mathbf{P}}
\newcommand{\bE}{\mathbf{E}}
\renewcommand{\tilde}{\widetilde}
\renewcommand{\hat}{\widehat}
\begin{document}

\title{Geodesics Toward Corners in First Passage Percolation}

\author[K. S. Alexander]{Kenneth S. Alexander}
\address{K.~S.~Alexander, Department of Mathematics, KAP 104\\
University of Southern California\\
Los Angeles, CA  90089-2532 USA}
\email{alexandr@usc.edu}

\author[Q. Berger]{Quentin Berger}
\address{Q. Berger, Sorbonne Universit\'e, LPSM\\
Campus Pierre et Marie Curie, case 158\\
4 place Jussieu, 75252 Paris Cedex 5, France}
\email{quentin.berger@sorbonne-universite.fr}

\keywords{stationary first passage percolation, geodesics.}
\subjclass[2010]{Primary: 60K35, 82B43}
\maketitle

\begin{abstract}
For stationary first passage percolation in two dimensions, the existence and uniqueness of semi-infinite geodesics directed in particular directions or sectors has been considered by Damron and Hanson \cite{DH14}, Ahlberg and Hoffman \cite{AH16}, and others. However the main results do not cover geodesics in the direction of corners of the limit shape $\mathcal{B}$, where two facets meet.
We construct an example with the following properties: (i) the limiting shape is an octagon, (ii) semi-infinite geodesics exist only in the four axis directions, and (iii) in each axis direction there are multiple such geodesics.  Consequently, the set of points of $\partial \mathcal{B}$ which are in the direction of some geodesic does not have all of $\mathcal{B}$ as its convex hull.
\end{abstract}

\section{Introduction}
We consider stationary first passage percolation (FPP) on a lattice $\mathbb{L}$ with site set $\ZZ^2$, and with a set of bonds which we denote~$\mE$. We are mainly interested in the usual set of nearest-neighbor bonds $\mE = \{ (x,y) ;  \Vert x-y \Vert_1 =1\}$, though in Section~\ref{sec:simple} we consider $\mE$ with added diagonal bonds to construct a simpler example.
Each bond $e$ of $\mE$ is assigned a random passage time $\tau_e\geq 0$, and the configuration $\tau$ is assumed stationary under lattice translations; the measure on the space $\Omega = [0,\infty)^\mE$ of configurations $\tau$ is denoted $\bP$, with corresponding expectation $\bE$.  For sites $x,y$ of $\mathbb{L}$, a \emph{path} $\gamma$ from $x$ to $y$ in $\mathbb{L}$ is a sequence $x=x_0,\dots,x_n=y$ with $x_i,x_{i+1}$ adjacent in $\mathbb{L}$ for all $i$; we may equivalently view $\gamma$ as a sequence of edges.  The \emph{passage time} $T(\gamma)$ of $\gamma$ is $T(\gamma) = \sum_{e\in\gamma} \tau_e$. 
For sites $x,y$ we define
\[
  T(x,y) = \inf\{T(\gamma): \gamma \text{ is a path from $x$ to } y\}.
\]
A \emph{geodesic} from $x$ to $y$ is a path which achieves this infimum.  A \emph{semi-infinite geodesic} $\Gamma$ from a site $x$ is a path with (necessarily distinct) sites $x=x_0,x_1,\dots$ for which every finite segment is a geodesic, and the \emph{direction} of $\Gamma$, denoted $\dir(\Gamma)$, is the set of limit points of $\{x_n/|x_n|: n\geq 1\}$. It is of interest to understand semi-infinite geodesics, and in particular the set of directions in which they exist. 

It is standard to make the following assumptions, from \cite{Ho08}.

\medskip
\noindent {\bf Assumption A1.} 
\begin{itemize}
\item[(i)] $\bP$ is ergodic with respect to lattice translation;
\item[(ii)] $\bE(\tau_e^{2+\gep})<\infty$ for any $e\in\mE$, for some $\gep>0$.
\end{itemize}
\medskip

\noindent Under A1, an easy application of Kingman's sub-additive theorem gives that for each $x\in\ZZ^2$ the limit
\[
  \mu(x) = \lim_{n\to\infty} \frac{T(0,nx)}{n}
\]
exists.  This $\mu$ extends to $\QQ^2$ by restricting to $n$ for which $nx\in\ZZ^2$, and then to $\RR^2$ by continuity; the resulting function is a norm (provided the limit shape defined below is bounded). Its unit ball is a nonempty convex symmetric set which we denote $\mB$. The \emph{wet region} at time $t$ is $\mB(t) = \{x+[-\tfrac 12,\tfrac 12]^2: T(0,x)\leq t\}$.
The shape theorem of Boivin \cite{Bo90} says that with probability one, given $\gep>0$, for all sufficiently large $t$ we have 
\[
  (1-\gep)\mB \subset \frac{\mB(t)}{t} \subset (1+\gep)\mB,
\] 
so $\mB$ is called the \emph{limit shape}.
H\"aggstr\"om and Meester \cite{HM95} showed that every compact convex~B with the symmetries of $\mathbb Z^2$ arises as the limit shape for some stationary FPP process.

We add the following assumptions, also used in \cite{Ho08}, \cite{DH14} and outlined in \cite{AH16} (see {\bf A2}).

\medskip
\noindent {\bf Assumption A2.}
\begin{itemize}
\item[(iii)] $\bP$ has all the symmetries of the lattice $\mathbb{L}$;
\item[(iv)] if $\alpha,\gamma$ are finite paths, with the same endpoints, differing in at least one edge then $T(\alpha) \neq T(\gamma)$ a.s.;
\item[(v)] $\bP$ has \emph{upward finite energy}: for any bond $e$ and any $t$ such that $\bP(\tau_e>t)>0$, we have 
\[
  \bP(\tau_e>t\mid \{\tau_f:f\neq e\}) >0 \quad \text{a.s.};
\]
\item[(vi)] the limit shape $\mB$ is bounded (equivalently, $\mu$ is strictly positive except at the origin.) 
\end{itemize}
\medskip

Thanks to (iv), the union of all geodesics from a fixed site $x$ to sites $y\in\ZZ^2$ is a tree, and we denote it $\mT_x=\mT_x(\tau)$.  By \cite{Ho08}, $\mT_x$ contains at least 4 semi-infinite geodesics, and  Brito and Hoffman \cite{BH17} give an example in which there are only 4 geodesics, and the direction for each corresponds to an entire closed quadrant of the lattice.

To describe the directions in which semi-infinite geodesics may exist, we introduce some terminology. 
A \emph{facet} of $\mB$ is a maximal closed line segment $F$ of positive length contained in $\partial \mB$; the unique linear functional equal to~1 on $F$ is denoted $\rho_F$.  For each angle from~0 there corresponds a unique point of $\partial \mB$ in the ray from~0 at that angle; a facet thus corresponds to a sector of angles, or of unit vectors. We say a point $v\in\partial \mB$ is of \emph{type i} ($i=0,1,2$) if $v$ is an endpoint of $i$ facets.  
 We may divide points of $\partial \mB$ (or equivalently, all angles) into 6 classes:
\begin{itemize}
\item[(1)]  \emph{exposed points of differentiability}, that is, exposed points of $\partial \mB$ where $\partial \mB$ is differentiable, necessarily type~0;
\item[(2)] \emph{facet endpoints of differentiability}, or equivalently, type-1 points where $\partial \mB$ is differentiable;
\item[(3)]  \emph{facet interior points}, necessarily type-0;
\item[(4)] \emph{half rounded corners}, that is, type-1 points where $\partial \mB$ is not differentiable;
\item[(5)] \emph{fully rounded corners}, that is, type-0 points where $\partial \mB$ is not differentiable;
\item[(6)] \emph{true corners}, meaning type-2 points.
\end{itemize}
Associated to any semi-infinite geodesic $\Gamma=\{x_0,x_1,\dots\}$ is its \emph{Busemann function} $B_\Gamma:\ZZ^2\times\ZZ^2\to\RR$ given by
\[
  B_\Gamma(x,y) = \lim_{n\to\infty} (T(x,x_n) - T(y,x_n)).
\]
From \cite[Theorems 2.5 and 2.6]{AH16}, we know the following, under Assumptions A1 and A2. Almost surely, there exists for any semi-infinite geodesic $\Gamma$ a linear functional $\rho_\Gamma$ on $\RR^2$ with the property that $B_\Gamma$ is \emph{linear to} $\rho_\Gamma$, that is, 
\[
  \lim_{|x|\to\infty} \frac{1}{|x|}\left| B_\Gamma(0,x) - \rho_\Gamma(x) \right| = 0.
\]
Still from \cite{AH16}, the set $\{\rho_\Gamma=1\}$ is always a supporting line of $\mB$, so its intersection with $\partial \mB$ is either an exposed point $v$ or a facet $F$, and then $\dir(\Gamma)$ is equal to $\{v\}$ or contained in $F$ (modulo normalizing to unit vectors.)  Thus $\dir(\Gamma)$ determines $\rho_\Gamma$, unless $\dir(\Gamma)$ consists of only a corner of some type.  Furthermore, there is a closed set $\mC_*$ of linear functionals such that the set of functionals $\rho_\Gamma$ which appear for some $\Gamma$ is almost surely equal to $\mC_*$.  If $v\in\partial\mB$ is not a corner, there is a $\rho$ such that $\{\rho=1\}$ is the unique tangent line to $\partial\mB$ at $v$, and we have $\rho\in\mC_*$.

In  \cite{AH16}, Ahlberg and Hoffman define  a \emph{random coalescing geodesic} (or \emph{RC geodesic}), which is, in loose terms, a mapping which selects measurably for each $\tau$ a semi-infinite geodesic $\Gamma_0=\Gamma_0(\tau)$ in $\mT_0(\tau)$, in such a way that when the mapping is applied via translation to obtain $\Gamma_x\in\mT_x$, $\Gamma_0$ and $\Gamma_x$ coalesce a.s.\ for all $x$.  The following statements are valid under Assumptions A1 and A2: they are part of, or immediate consequences of, results of Ahlberg and Hoffman \cite[Theorems 12.1 and 12.2]{AH16}, strengthening earlier results from \cite{Ho08} and~\cite{DH14}. 
\begin{itemize}
\item[(I)] For each exposed point of differentiability $v\in\partial \mB$, there is a.s.~a unique RC geodesic $\Gamma$ with $\dir(\Gamma) = \{v/|v|\}$. 
\item[(II)] For each half rounded corner $v\in\partial \mB$, there is a.s.~at least one RC geodesic $\Gamma$ with $\dir(\Gamma) = \{v/|v|\}$; for one such $\Gamma$ the linear functional $\rho_\Gamma$ corresponds to a limit of supporting lines taken from the non-facet side of $v$.  This uses the fact that $\mC_*$ is closed. 
\item[(III)] For each fully rounded corner $v\in\partial \mB$, there are a.s.~at least two RC geodesics $\Gamma$ with $\dir(\Gamma) = \{v/|v|\}$, with distinct linear functionals $\rho_\Gamma$ corresponding to limits of supporting lines from each side of $v$. 
\item[(IV)] Given a facet $F$ with corresponding sector $S_F$ of unit vectors, there is a.s.~a unique RC geodesic $\Gamma$ with $\dir(\Gamma)\subset S_F$ and $\rho_\Gamma=\rho_F$.  For any other RC geodesic $\Gamma$ with $\dir(\Gamma)\cap S_F\neq \emptyset$, this intersection is a single endpoint of $S_F$ which must be a corner.
\end{itemize}

 But relatively little has been proved about geodesics, or RC geodesics, in the directions of true corners  (where several supporting lines coexist), for instance when the limit shape is a polygon, see e.g.\ the discussion in Section~3.1 of \cite{DH17}.  One may ask, must every true-corner direction be in $\dir(\Gamma)$ for some geodesic $\Gamma$?  Equivalently, must the convex hull of 
\[
  \mV_{\rm geo} := \{v\in\partial \mB: v/|v| \in\dir(\Gamma) \text{ for some semi-infinite geodesic } \Gamma\}
  \]
be all of $\mB$?
Further, we can consider the nonuniqueness set
\[
  \mN := \big\{ u\in S^1: \text{ there exist multiple semi-infinite geodesics $\Gamma$ with } \dir(\Gamma) = \{u\} \big \}.
\]
For each fully rounded corner $v$ we have $\bP(v\in\mN)=1$.
For each non-corner $v\in\partial \mB$ we have $\bP(v\in\mN)=0$, but in the case of $\mB$ with no corners this does not mean $\mN$ is empty.
 If every point of $\partial \mB$ is an exposed point of differentiability then there is at least one geodesic in every direction;
the union of all semi-infinite geodesics from 0 is therefore a tree with infinitely many branches, and each branching produces a point of $\mN$, so $\mN$ is infinite a.s.  In the example of Brito and Hoffman \cite{BH17}, the limit shape is a diamond with true corners on the axes, and for each of these corners $v$ there is a.s.~no geodesic $\Gamma$ with $\dir(\Gamma)= \{v/|v|\}$, so $\bP(v\in\mN)=0$.
This suggests the question, must $\bP(v\in\mN)=0$ for true corners? 

\smallskip
Our primary result is an example of FPP process, which we call \emph{fast diagonals FPP}, in which some true corners have no geodesic, and others have multiple geodesics a.s.\  This means in particular that the convex hull of $\mV_{\rm geo}$ is not all of $\mB$.

\begin{theorem}\label{main}
The fast diagonals FPP process (defined in Section \ref{sec:construct}) satisfies Assumptions~A1-A2, and 
has the following properties:
\begin{itemize}
\item[(i)] The limit shape is an octagon, with corners on the axes and main diagonals.
\item[(ii)] Almost surely, every semi-infinite geodesic $\Gamma$ is directed in an axis direction (that is, $\dir(\Gamma)$ consists of a single axis direction.)
\item[(iii)] Almost surely, for each axis direction there exist at least two semi-infinite geodesics directed in that direction.
\end{itemize}
\end{theorem}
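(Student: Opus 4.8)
The plan has four parts. First, verifying Assumptions~A1--A2 for the process of Section~\ref{sec:construct} is routine: ergodicity and the $(2+\gep)$-moment bound come from the underlying field; the lattice symmetries can be built in directly; requiring every edge to carry an independent absolutely continuous perturbation yields both the distinct-passage-times property and upward finite energy; and $\mB$ is bounded because the construction keeps $\mu$ strictly positive off the origin (e.g.\ passage times bounded away from $0$). So the content is in (i)--(iii).

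For (i), I would first conjecture $\mu(x,y)=\max\{a|x|+b|y|,\,b|x|+a|y|\}$ for explicit $a,b\in(0,\infty)$ with $a\neq b$ read off from the construction; its unit ball is the octagon with vertices $(\pm r,0),(0,\pm r)$ on the axes and $(\pm s,\pm s)$ on the main diagonals, where $r=1/\max\{a,b\}$, $s=1/(a+b)$, and $s<r<2s$ (the ``fast diagonals'' reflected in $r<\sqrt 2\,s$). For the upper bound I would exhibit a near-optimal family of paths: to reach $(m,n)$ with $m\ge n\ge 0$, interleave the cheap ``fast diagonal'' moves (realising the efficient diagonal rate) with straight moves along the long axis, so that $T(0,(m,n))\le \mu(m,n)+o(m)$; symmetrising over the eight sectors gives $\mu(x)=\max\{a|x|+b|y|,b|x|+a|y|\}$ as an upper bound. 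The matching lower bound I would get from a covering/counting estimate on an arbitrary path from $0$ to $(m,n)$: classifying its edges by orientation and by position relative to the fast-diagonal pattern shows the pattern can be exploited only at the diagonal rate while the residual displacement is charged the full rate, so the cost is $\ge\mu(m,n)-o(m)$. This computes $\mu$, hence $\mB$, and one reads off that the eight facets have positive length and the axis and diagonal points of $\partial\mB$ are genuine type-$2$ corners.

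Statement (ii) is the heart of the theorem and the step I expect to be the main obstacle. By the Ahlberg--Hoffman dichotomy (recalled above from \cite{AH16}), any semi-infinite geodesic $\Gamma$ either has $\dir(\Gamma)=\{v/|v|\}$ for a single corner $v$ of $\mB$, or has $\rho_\Gamma=\rho_F$ for some facet $F$ with $\dir(\Gamma)\subseteq S_F$. So I must exclude (a) a diagonal corner being a direction, and (b) any direction lying in an open facet sector $S_F^\circ$. Both would follow from a single ``hugging'' lemma: a.s., every semi-infinite geodesic whose direction meets a closed facet sector $S_F$ must asymptotically run along the coordinate axis bounding $S_F$, so its only possible limit direction is the axis endpoint of $S_F$. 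Granting this, no geodesic is directed into any $S_F^\circ$; and a geodesic directed at a diagonal corner $v_d$, whose direction lies in the sector $S_F$ of an adjacent facet, would have to be axis-directed, a contradiction. Proving the lemma is where the explicit construction is indispensable: one must show that a long segment of a geodesic running near a main diagonal, or near an intermediate (facet) direction, always admits a strictly cheaper competitor obtained by re-routing through or away from the fast-diagonal pattern and back onto the axis, contradicting optimality (with A2(iv) excluding ties). I expect establishing this quantitative ``geodesics are pulled onto the axes'' statement to absorb essentially all the work, and to be precisely what the construction of Section~\ref{sec:construct} is designed to force.

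Statement (iii) then follows quickly from (i), (ii) and fact~(IV). Fix an axis corner $v$ and let $F^+,F^-$ be the two facets of $\mB$ meeting at $v$; they are reflections of each other across the axis through $v$, and since $v$ is a corner their linear functionals satisfy $\rho_{F^+}\neq\rho_{F^-}$. By~(IV) there are RC geodesics $\Gamma^\pm$ with $\rho_{\Gamma^\pm}=\rho_{F^\pm}$ and $\dir(\Gamma^\pm)\subseteq S_{F^\pm}$; by~(ii), $\dir(\Gamma^\pm)$ is a single axis direction, and the only axis direction in $S_{F^\pm}$ is $v/|v|$, so $\dir(\Gamma^+)=\dir(\Gamma^-)=\{v/|v|\}$. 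Were $\Gamma^+$ and $\Gamma^-$ the same path they would induce the same Busemann function and hence $\rho_{\Gamma^+}=\rho_{\Gamma^-}$, contradicting $\rho_{F^+}\neq\rho_{F^-}$; so there are at least two semi-infinite geodesics directed at $v$. Applying this to each of the four axis corners proves (iii), and since (ii) forbids any geodesic directed elsewhere, $\mV_{\rm geo}$ consists of exactly the four axis corners, whose convex hull is a proper subset of $\mB$.
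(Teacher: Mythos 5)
Your architecture matches the paper's, and your derivation of (iii) from (ii) is essentially the paper's argument verbatim: the paper invokes Damron--Hanson to produce, for each facet $F$, a geodesic $\Gamma_F$ whose Busemann function is linear to $\rho_F$, then applies (ii) to force $\dir(\Gamma_F)$ onto the axis bounding $F$, and concludes $\Gamma_F\ne\Gamma_{\hat F}$ for the reflected facet $\hat F$ since $\rho_F\ne\rho_{\hat F}$; your appeal to Ahlberg--Hoffman's statement~(IV) is an equivalent phrasing. Your plan for (ii) via a ``hugging lemma'' is likewise the right strategy, and you correctly identify it as the heart of the matter.

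The problem is that the hugging lemma is not a deferred lemma: it \emph{is} the paper, and you have not supplied it. Concretely, what fills the gap (Sections~\ref{sec:construct}--\ref{sec:i-iii}) is: (a) the construction of a random success event $F_k$ --- an X-shaped region $\gO_k$ around the axes is bounded by unusually fast zigzag highways, contains no zigzag highways of class $\geq k$, has well-placed nearby HV highways, and avoids slow bonds --- together with a proof that $F_k$ occurs with positive density and that $n_{j+1}/n_j\to 1$ along its occurrence indices (Lemma~\ref{lem:success}); (b) the deterministic Lemma~\ref{lem:geodesics}, which shows that on $F_k$ every finite geodesic from the origin stays inside a corridor $\Lambda_{H,k}$ or $\Lambda_{V,k}$ until exiting $\gO_k^{NE}\cap\gO_k^{NW}$, proved by comparing to explicit canonical competitor paths, which in turn rests on the entire apparatus of compensated core passage times, the bond taxonomy (meeting, intersection, entry/exit, skimming bonds, etc.), and a linear-programming-style minimization \eqref{obj}--\eqref{glower} over eight edge-count variables subject to displacement constraints; and (c) the stabilization of corridor orientation plus the vanishing width-to-length ratio, which converts the corridor bound into axis-directedness. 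None of this follows from the Ahlberg--Hoffman dichotomy you cite --- the dichotomy tells you what to exclude, not how. Note also that the lower bound on $\mu$ you wave at for (i) is extracted in the paper from the same passage-time estimates (equation~\eqref{TGlower}), so your sketch there is missing the identical machinery. A small side error: your parenthetical that $r<\sqrt 2\,s$ reflects ``fast diagonals'' is reversed for the actual process, where the axis speed $1/0.9$ exceeds the diagonal speed $\sqrt 2/1.4$, giving $r>\sqrt 2 s$; the name refers to the zigzag highways being fast relative to HV highways, not to the limit-shape diagonal radius exceeding the axis radius.
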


We first introduce a simpler example in Section \ref{sec:simple}, which does not satisfy Assumption~A2 (in particular (iv) and (v)), but encapsulates the key ideas of our construction. The remaining main part of the paper is devoted to modifying this example in order to satisfy Assumption~A2,  which brings many complications, see Section \ref{sec:main}.

In Theorem \ref{main} the shape is an octagon, and therefore \cite{Ho08} tells that there must therefore be at least eight geodesics, one for each flat edge of the shape. In our example, there are no geodesics associated to the supporting lines that only touch the shape at the diagonal corners, and the geodesics associated to the flat pieces are asymptotically directed along the axes. 
However, questions remains regarding the geodesics directed along the axis---for example, are there only two geodesics in each axis direction? Put differently, are there geodesics associated to supporting lines that intersect the shape only at the corners on the axes? Our guess is that, in our example, there are indeed a.s.~only two geodesics in the direction of the axis, but the question remains as to whether it is possible to build a model with more than two geodesics  in a corner direction.

Our result can perhaps be adapted to produce more general polygons, with polygon vertex directions alternating between those having two or more geodesics and those having none, and with no other directions with geodesics.
In higher dimensions, the possibility of analogous examples is unclear.

\begin{remark}\rm
We make some informal comments here, without full proof, about the linear functionals associated to the geodesics in Theorem \ref{main}, and about geodesics vs.~RC geodesics.

Let us consider the collection $\mG_{E,x}$ of geodesics directed eastward from $x$, and the union $\mT_{E,x}$ of all such geodesics. 
For $x=0$, each such eastward geodesic $\Gamma$ has a height $h(\Gamma)$ of its final point of intersection with the vertical axis.  We claim that all semi-infinite geodesics contained in $\mT_{E,0}$ are in $\mG_{E,0}$, and $h(\Gamma)$ is bounded over $\Gamma \in \mG_{E,0}$, a.s.  In fact there can be no westward geodesic contained in $\mT_{E,0}$ because any eastward geodesic from $0$ passing through any point near the negative horizontal axis sufficiently far west from $0$ must cross a northward or southward geodesic on its way back eastward, contradicting uniqueness of point-to-point geodesics (i.e. Assumption A2(iv).)  Regarding northward geodesics contained in $\mT_{E,0}$, they are ruled out when we show in the proof of (ii) in Section \ref{sec:i-iii} that there is a.s. a random $R$ such that, roughly speaking, no eastward geodesic ``goes approximately northward for a distance greater than $R$ before turning eastward,'' and similarly for southward in place of northward. This also shows the boundedness of $h(\Gamma)$.

Now any limit of geodesics in $\mG_{E,0}$ must be contained in $\mT_{E,0}$, and it follows from our claim that any such limit is in $\mG_{E,0}$.  

It follows that among all eastward geodesics $\Gamma$ in $\mG_{E,0}$ with a given value of $h(\Gamma)$, there is a leftmost one, where ``leftmost'' is defined in terms of the path $\Gamma$ in the right half plane after the point $(0,h(\Gamma))$.
Hence boundedness of $h$ shows there is a leftmost geodesic $\Gamma_L$ overall in $\mG_{E,0}$.  It is then straightforward to show that $\rho_{\Gamma_L}$ must be the linear functional equal to 1 on the side of $\mB$ connecting the positive horizontal axis to the main diagonal.  By \cite[Theorem 2.6]{AH16}, $\Gamma_L$ is a.s.~the unique geodesic with corresponding linear functional $\rho_{\Gamma_L}$, so by \cite[Theorem 12.1]{AH16},  $\Gamma_L$, viewed now as a function of the configuration $\tau$ and initial point $x$, must be an RC geodesic.

Similar considerations apply symmetrically to other directions and to rightmost geodesics.  Therefore we can replace ``geodesics'' with ``RC geodesics'' in Theorem \ref{main}(iii).
\end{remark}

\section{Simple example: diagonal highways only}
\label{sec:simple}

For this section we consider the lattice $\mathbb{L}$ with site set $\ZZ^2$, with the set of bonds $\mE = \{ (x,y) ; \Vert x-y \Vert_{\infty}  =1 \}$, which adds diagonal bonds to the usual square lattice. We frequently identify bonds and path steps by map directions: either SW/NE or SE/NW for diagonal bonds, and N, NE, etc.~for steps.  By \emph{axis directions} we mean horizontal and vertical, or N, E, W, S, depending on the context.  For $a$ preceding $b$ in a path $\gamma$, we write $\gamma[a,b]$ for the segment of $\gamma$ from $a$ to $b$.

\smallskip
We assign all horizontal and vertical bonds passage time $1$.  (This makes the model in a sense degenerate, which is one of the aspects we modify in Section~\ref{sec:main}.)  Let $\tfrac 12 < \theta < 1$ and $(2\theta)^{-1}<\eta<1$. For diagonal bonds, for $k\geq 1$, a \emph{highway of class} $k$ consists of $2^k-1$ consecutive bonds, all oriented SW/NE or all SE/NW.  The collection of all highways of all classes is denoted $\mathbb{H}$, and a highway configuration, denoted $\omega$, is an element of $\{0,1\}^{\mathbb{H}}$. When a coordinate is 1 in $\omega$ we say the corresponding highway is \emph{present} in $\omega$.  To obtain a random highway configuration, for each of the two orientations we let southernmost points of class-$k$ highways occur at each $x\in\ZZ^2$ independently with probability $(\theta/2)^k$, for each $k\geq 1$.  Every diagonal bond is a highway of class 0. For each diagonal bond $e$ we have
\begin{equation}\label{classk}
  \bP(e \text{ is in a present class-$k$ highway}) = 1 - \left( 1 - \frac{\theta^k}{2^k} \right)^{2^k-1} \leq \theta^k, \quad k\geq 1,
\end{equation}
so with probability one, $e$ is in only finitely many present highways.  Thus for diagonal $e$ we can define $k(e)=\max\{k: e$ is in a present class-$k$ highway$\}$ if this set is nonempty, and $k(e)=0$ otherwise, and then define its passage time
\begin{equation}\label{omegae}
  \tau_e = \begin{cases} \sqrt{2}(1+\eta^{k(e)}) &\text{if } k(e)\geq 1;\\ 3 &\text{if } k(e)=0. \end{cases}
\end{equation}
For all horizontal and vertical bonds we define $\tau_e=1$.  
Note that the value 3 ensures non-highway diagonal bonds never appear in geodesics.  

Let $A_1,A_2$ denote the positive horizontal and vertical axes, respectively, each including 0.  Let
\[
  r_k = \sum_{j= k}^{\infty} \theta^j = \frac{\theta^{k}}{1-\theta},
\]
so that $\bP( e$ is in some present highway of class $\ge k)\le r_k$.
We fix $C$ and take $k_0$ large enough so
\begin{equation}\label{assumps}
  \eta^{k_0}<\frac{1}{32},\quad r_{k_0} \leq \frac 12, \quad 
    0.1 \cdot 2^k(\eta^{k-1}-\eta^{k}) > \frac{4C}{r_k} \quad \text{for all } k\geq k_0,
\end{equation}
the last being possible by our choice of $\eta$.

\begin{proposition}
The stationary first passage percolation process defined as above has the following properties:
\begin{itemize}
\item[(i)] The limit shape is an octagon, with corners on the axes and main diagonals.
\item[(ii)] The only infinite geodesics are vertical and horizontal lines (and the only geodesics starting at the origin are indeed the vertical and horizontal axes).
\end{itemize}
\end{proposition}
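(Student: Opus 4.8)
The plan is to prove (i) and (ii) in that order, using (i) as the backbone for the geometry needed in (ii).

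\medskip
\noindent\textbf{Part (i): the limit shape is an octagon.}
First I would compute $\mu$ along the axes: since horizontal and vertical bonds have passage time $1$, clearly $\mu(e_1)=\mu(e_2)=1$, and a straight axis path is optimal (any detour only adds cost), so the octagon meets the axes at $\pm e_1,\pm e_2$. Next I would compute $\mu$ along the main diagonal direction. Here the relevant quantity is the typical cost per unit $\ell^1$-length of travelling NE by stringing together present highways. A highway of class $k$ covers $2^k-1$ diagonal steps at cost $\sqrt2(1+\eta^k)$ per step, so per diagonal step the cost is $\sqrt2(1+\eta^k)$, which decreases to $\sqrt2$ as $k\to\infty$. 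The point is that although high-class highways are rare (density $(\theta/2)^k$ of starting points), a class-$k$ highway is \emph{long} ($\sim 2^k$), so along a diagonal line of length $N$ one expects to be able to cover almost all of it using highways of class up to roughly $\log_2 N$; a Borel--Cantelli / second-moment argument along the line $\{(j,j)\}$ shows the fraction of steps not covered by a present highway of class $\ge k$ is $O(r_k)$ and $r_k\to 0$, while the extra cost on covered steps is $\eta^k\to 0$. Hence $\mu$ in the diagonal direction equals $\sqrt2$ per diagonal step, i.e.\ the limit shape passes through $\pm(1/2,1/2)$ etc.\ --- wait, more precisely $\mu(1,1)=\sqrt2$ so the diagonal extreme point of $\mB$ is $(1/2,1/2)$, the same as for the $\ell^\infty$ ball; I would double-check this is \emph{outside} the segment joining $e_1$ and $e_2$, which it is. Then I would argue that in every direction strictly between an axis and a diagonal, $\mu$ is \emph{linear}, interpolating between the axis value and the diagonal value: an optimal path goes partly along an axis and partly along diagonal highways, and convexity plus the two computed extreme points forces $\partial\mB$ to be the straight segments joining $e_1,(1/2,1/2),e_2,\dots$. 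This gives the octagon, with true corners at $\pm e_1,\pm e_2$ (axes) and $\pm(1/2,1/2),\pm(1/2,-1/2)$ (diagonals). The facet endpoints are precisely these eight corners.

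\medskip
\noindent\textbf{Part (ii): geodesics are exactly the axis lines.}
The heart of the matter is to show no semi-infinite geodesic can be directed into (or along) a diagonal facet, and that the axis lines themselves are geodesics. For the axis lines: the segment of the $x$-axis from $0$ to $(n,0)$ has passage time $n$, and I claim any competing path costs more --- a path from $0$ to $(n,0)$ using $a$ horizontal, $b$ vertical, and some diagonal steps has $\ell^1$-displacement constraints forcing total cost $\ge n$ with equality only for the straight path, because each diagonal step costs $\ge\sqrt2>1$ per unit $\ell^1$-length (here $\sqrt2(1+\eta^k)/2 > 1$ needs $\eta^k > \sqrt2-1$, which fails for large $k$!). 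So this is subtler: a diagonal highway step covers $\ell^1$-distance $2$ at cost $\sqrt2(1+\eta^k)$, and $\sqrt2(1+\eta^k)/2 < 1$ once $\eta^k < \sqrt2-1 \approx 0.414$, i.e.\ high-class highways are \emph{cheaper per unit $\ell^1$-length than axis bonds}. This is exactly why the diagonal corners of $\mB$ stick out to $(1/2,1/2)$ rather than staying on the segment between $e_1$ and $e_2$ --- consistent with (i). So I must be more careful: the axis IS still a geodesic because to exploit a diagonal highway to shortcut between two axis points you must leave the axis and come back, and although the diagonal portion is cheap per unit length, you pay to get up to it and down from it with axis-like moves, and a short computation (exploiting that you gain only on the $\eta^k$-discount, scaled by highway length $2^k$, versus the cost of the detour) shows no net gain for points \emph{on} the axis. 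This is precisely the content of the inequality $0.1\cdot 2^k(\eta^{k-1}-\eta^k) > 4C/r_k$ in \eqref{assumps}: it quantifies that the discount from moving up one highway class is large compared to the detour cost. I would set up a renormalization/block argument: along any would-be diagonal geodesic, look at the sequence of highways used; to be a geodesic it must use highways of unboundedly large class (else cost per step $>\sqrt2$, contradicting the shape), but switching to a higher class forces a detour whose cost, by \eqref{assumps}, is not compensated within the geodesic window --- more precisely, a geodesic to a far diagonal point would have to make order-$\log N$ such switches, each forced to be "free" in an averaged sense, but the local fluctuations (controlled by the sub-additive error, size $C$ per relevant scale, hence the $C$ in \eqref{assumps}) make at least one switch strictly costly with probability $\to1$. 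The conclusion is that any infinite geodesic, if not an axis line, would have to track a diagonal facet direction, but we've shown any sufficiently long diagonal geodesic segment can be strictly improved; hence the only infinite geodesics are the axis lines, and from the origin specifically the two axes (the four rays).

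\medskip
\noindent\textbf{Main obstacle.}
The hard part will be the quantitative "detour is not compensated" estimate underlying the exclusion of diagonal geodesics --- i.e.\ turning the heuristic "you must keep upgrading highway class, and each upgrade costs a detour that the sub-additive fluctuations make strictly positive somewhere" into a proof. This requires (a) showing a geodesic toward a diagonal corner must use highways of arbitrarily large class infinitely often (a shape-theorem + renewal-type argument), (b) a careful accounting, at each scale $k\ge k_0$, of the minimal detour cost to access a class-$k$ highway from a class-$(k-1)$ one versus the per-step discount $\sqrt2(\eta^{k-1}-\eta^k)$ it buys over the $\sim 2^k$ steps, and (c) a Borel--Cantelli argument over scales using \eqref{assumps} to conclude that almost surely, beyond a random scale, every attempted upgrade is strictly suboptimal, so no infinite diagonal geodesic exists. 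Establishing that the axis lines are genuinely geodesics (not merely that diagonal ones fail) is comparatively routine but also relies on the same per-step bookkeeping, since naively a diagonal shortcut looks cheaper per unit length.
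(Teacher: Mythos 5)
Your outline for (i) is essentially the paper's: compute the asymptotic speed along axes and diagonals, then show the linear interpolation is both an upper and a lower bound for $\mu$ in the sectors between. (Two small remarks: convexity alone does \emph{not} force flat facets --- the ball could bulge past the chord --- so the lower bound $\tau((0,0),(a,b))\ge\sqrt2\,b+(a-b)$ for $0\le b\le a$ is genuinely needed, not just the upper bound from the axis-then-highway route; and $\mu(1,1)=\sqrt2$ puts the corner at $(1/\sqrt2,1/\sqrt2)$, not $(1/2,1/2)$, which is the point that actually lies \emph{on} the segment from $e_1$ to $e_2$.)

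For (ii) there is a genuine gap, and it is not a small one. You have correctly located the key inequality $0.1\cdot2^k(\eta^{k-1}-\eta^k)>4C/r_k$ from \eqref{assumps} as the engine, but the argument you build around it is not the one that works, and it is left at the heuristic stage at exactly the point where the work is. Your plan is: (a) show a non-axis geodesic must keep upgrading highway class; (b) bound the detour cost of an upgrade against its per-step discount; (c) Borel--Cantelli to get a strictly costly upgrade almost surely. But a ``class-$(k-1)$ to class-$k$ upgrade cost'' is not a well-posed local quantity, because the geodesic is free to zigzag, backtrack, reuse highways of many classes, and approach its target at any angle; nothing pins down where an ``upgrade'' happens or forces the detour to be paid near the discount it buys. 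The paper's proof instead introduces a geometric object you never construct: a random region $\hat\Omega_k$ bounded by the two axes, a translated $L$-shape $\hat G_k$ at distance $\sim 2^k$, and the two closest class-$\ge k$ highways $\hat H_{1,k},\hat H_{2,k}$ hitting the axes, together with a ``success'' event $\hat F_k$ that (1) these highways hit the axes within $C/r_k$ of the origin and (2) no other class-$\ge k$ highway crosses $\hat\Omega_k$. On $\hat F_k$ one gets a clean deterministic statement: \emph{every} geodesic from $0$ to any point outside $\hat\Omega_k$ must follow an axis until it reaches $\hat H_{1,k}$ or $\hat H_{2,k}$. This is proved by a case analysis on the first exit point $\hat p_x$ from $\hat\Omega_k$ and a direct path comparison against a canonical competitor $\psi_x$ (axis, then highway); \eqref{assumps} enters to make that comparison strict in the ``exit through $\hat G_k$'' case. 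Since $\bP(\hat F_k\text{ i.o.})=1$ and $\hat X_{i,k}\to\infty$, every semi-infinite geodesic from $0$ is an axis ray.

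This is the structural idea your proposal is missing, and without it I do not see how to close the argument you sketch. In particular, your route only addresses geodesics ``directed toward a diagonal corner''; you also need to exclude directions in facet interiors (every non-axis direction), and you need a statement that applies to \emph{all} geodesics from the origin at once, not just to a geodesic already known to head diagonally. The region $\hat\Omega_k$ and event $\hat F_k$ deliver both in one stroke, because the comparison is made at the first exit from $\hat\Omega_k$ regardless of where the target $x$ is. I would recommend reorganizing your Step (ii) around such a success event rather than around highway-class upgrades.
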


\begin{proof}
Let us first show that $\mB$ is an octagon with Euclidean distance $1$ in the axis and diagonal directions, and a facet in each of the 8 sectors of angle $\pi/4$ between an axis and a diagonal.
As a lower bound for the passage time from $(0,0)$ to $(a,b)$, we readily have that for $0\le b\le a$  (the other cases being treated symmetrically):
\[ \tau\big( (0,0),(a,b) \big) \ge \sqrt{2}  b + (a-b)\, . \]
For an upper bound, it follows from $\theta>1/2$ that the horizontal (or vertical) distance from the origin to the nearest diagonal highway $H$ connecting the postive horizontal axis to height $b$ is $o(b)$ a.s.~(see \eqref{Ik} below), so as $b\to\infty$
\[ \tau\big( (0,0),(a,b) \big) \le  (\sqrt{2} +o(1)  ) b + (a-b)\, .\]
This reflects the fact that one route from $(0,0)$ to $(a,b)$ is to follow the axis horizontally to a diagonal highway~$H$, then follow $H$ to the top or right side of the rectangle $[0,a]\times[0,b]$, then follow that side of the rectangle to $(a,b)$, provided $H$ intersects the rectangle.
The linearity of the asymptotic expression $\sqrt 2 b + (a-b)$ means that the limit shape is flat between any diagonal and an adjacent axis, while the asymptotic speed is 1 out any axis or diagonal, so the limit shape $\mB$ is an octogon. 
This proves item (i) and we focus on item (ii).

\smallskip
\noindent
{\it Step 1. Construction of a ``success'' event.}
For $x$ in the first quadrant $Q$, let $\Delta(x)$ denote the Euclidean distance in the SW direction from $x$ to $A_1\cup A_2$.  
Let $\hat G_k=\{x\in Q:\Delta(x) = (2^{k-1}+1)\sqrt{2}\}$, which is a translate of $A_1\cup A_2$. For $j> k$ define three random sets of highways:
\[
  \mG_{k,j} = \{\text{all present SW/NE highways of class $j$ intersecting both $A_1\cup A_2$ and $\hat G_k$}\},
\]
\[
  \mG_{k,j}' = \{\text{all present SW/NE highways of class $j$ crossing $A_1\cup A_2$ but not $\hat G_k$}\},
\]
\[
  \mG_{k,j}'' = \{\text{all present SW/NE highways of class $j$ crossing $\hat G_k$ but not $A_1\cup A_2$}\}.
\]
Note that these three sets are independent of each other, and intersections with each along any given line have density at most $\theta^j$, by \eqref{classk}. 
We also highlight that $\mG_{k,j}$ for $j\le k-1$ is empty, since highways of class $\le k-1$ are too short to connect $A_1\cup A_2$ and $\hat G_k$.
Again, using \eqref{classk}, intersections with $\mG_{k,k}$ have density (over sites) 
\begin{equation}\label{Gkkdensity}
  \bP(0\in \mG_{k,k}) \ge 1 - \left( 1 - \frac{\theta^k}{2^k} \right)^{2^k-1} \geq 1 - e^{-\theta^k/2} \geq \frac{\theta^{k}}{2}.
\end{equation}  
Here $0\in \mG_{k,k}$ is a shorthand notation for 0 being in a highway in $\mG_{k,k}$, and the last inequality holds provided that $k$ is sufficiently large.
Let
\[
\hat   H_{i,k} = \text{the highway intersecting $A_i$ closest to 0 among all in $\cup_{j \ge k} \mG_{k,j}$}, \quad i=1,2,
\]
and let $\hat U_k=(\hat X_{1,k},0)$ and $\hat V_k=(0,\hat X_{2,k})$ denote the corresponding intersection points in $A_1$ and $A_2$.
Let $\hat \Omega_k$ denote the \emph{open} region bounded by $A_1\cup A_2,\hat G_k, \hat H_{1,k}$, and $\hat H_{2,k}$, see Figure~\ref{fig1}.

\begin{figure}[htbp]
\begin{center}
\includegraphics[scale=0.8]{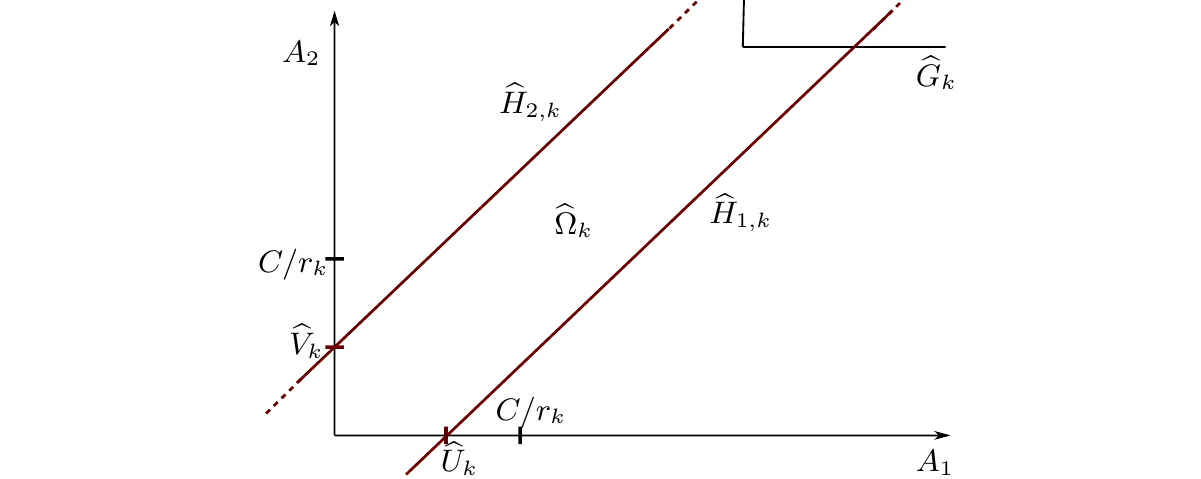}
\end{center}
\caption{\footnotesize 
Representation of the region $\hat\Omega_k$, which is enclosed by $A_1,A_2,\hat G_k$ and $\hat H_{1,k},\hat H_{2,k}$. Here, the event $\hat I_k$ is realized.}
\label{fig1}
\end{figure}

We define the event $\hat F_k = \hat I_k\cap \hat M_k$ (success at stage $k$) where
\begin{itemize}
\item[(i)] $\hat I_k: \max(\hat X_{1,k},\hat X_{2,k}) \leq C/r_k$, \quad with $C$ from \eqref{assumps}; 
\item[(ii)] $\hat M_k:$ every SW/NE highway intersecting $\hat \Omega_k$ is in classes $1,\dots,k-1$.
\end{itemize}
Note that a highway intersecting $\hat \Omega_k$ cannot intersect both $A_1\cup A_2$ and $\hat G_k$, by definition of~$\hat H_{i,k}$.

We claim there exists $\lambda>0$ such that 
\begin{equation}\label{claim}
  \text{$\bP(\hat F_k) \geq \lambda$ for all $k\geq k_0$.}
\end{equation}

Let us first prove that $\bP(\hat I_k)$ is bounded away from $0$.  Similarly to \eqref{Gkkdensity}, we have that  $\bP\big( 0\in \bigcup_{j\ge k} \mG_{k,j} \big)\ge r_k/3$,  so for $i=1,2$, by independence,
\begin{equation}\label{Ik}
  \bP(\hat X_{i,k} > C/r_k) \leq \exp\left( -C/3 \right) =: \zeta <1,
     \quad \text{for all } k\geq k_0.
\end{equation}
By independence, we get that $\bP(\hat I_k)\geq (1-\zeta)^2$.

We next prove that $\bP(\hat M_k\mid \hat I_k)$ is bounded away from 0 for $k\geq k_0$.  In fact, by the above-mentioned independence of the three sets of highways, by \eqref{classk} and \eqref{assumps} we have
\[
  \bP(\hat M_k\mid \hat I_k) \geq \min_{x_1,x_2\leq C/r_k} \bP( \hat M_k\mid \hat X_{1,k}=x_1,\hat X_{2,k}=x_2)
    \geq \min_{x_1,x_2\leq C/r_k} (1-r_k)^{x_1+x_2-1} \geq e^{-2C}.
\] 
This completes the proof of \eqref{claim}.  A slight modification of this proof shows that for fixed $\ell$, for $k$ sufficiently large we have $\bP(\hat F_k\mid \sigma(\hat F_1,\dots,\hat F_\ell))>\lambda/2$, and it follows that $\bP(\hat F_k\ i.o.)=1$.

\smallskip
\noindent
{\it Step 2. Properties of geodesics in case of a success.}
We now show that when $\hat F_k$ occurs, for every $x\notin A_1\cup A_2 \cup \hat  \Omega_k$ in the first quadrant, every geodesic $\hat \Gamma_{0x}$ from 0 to $x$ follows $A_1$ from 0 to $\hat U_k$, or $A_2$ from 0 to $\hat V_k$.  Since $x$ is in the first quadrant, it is easily seen that any geodesic from 0 to $x$ has only N, NE and E steps. 
Let $\hat p_x=(r,s)$ be the first site of $\hat \Gamma_{0x}$ not in $A_1\cup A_2 \cup \hat \Omega_k$.  Besides the geodesic $\hat \gG_{0x}[0, \hat p_x]$, we define an alternate path $\psi_x$ from 0 to $\hat p_x$ as follows; for this we assume $\hat p_x$ is in the first quadrant on or below the main diagonal, and make the definition symmetric for $\hat p_x$ elsewhere.
\begin{itemize}
\item[(i)] If $\hat p_x \in \hat H_{1,k}$ we let $\psi_x$ follow $A_1$ east from 0 to $\hat U_k$, then NE from $\hat U_k$ to $\hat p_x$ on $\hat H_{1,k}$.  
\item[(ii)] If $\hat p_x$ is in the horizontal part of $\hat G_k$ we let $\hat U_k'$ be the intersection of $\hat H_{1,k}$ with the vertical line through $\hat p_x$, and let $\psi_x$ be the path east from 0 to $\hat U_k$, then NE to $\hat U_k'$, then north to $\hat p_x$.  (Here we assume $k_0$ is chosen large enough so that $C/r_k < 2^{k-1}$, ensuring that $\hat p_x$ is farther east than $\hat U_k$.) 
\item[(iii)] Otherwise $\hat p_x$ is adjacent to $A_1$ and the final step of $\hat \gG_{0x}[0,\hat p_x]$ is from some $\hat U_k''\in A_1$ to $\hat p_x$, N or NE.  We let $\psi_x$ go east from 0 to $\hat U_k''$, then take one step (N or NE) to $\hat p_x$.
\end{itemize}

In case (i), the path $\psi_x$ has no N steps, and it is easily seen that any path in $A_1\cup A_2\cup\Omega_k$ from 0 to $\hat p_x$ containing some N steps will be strictly slower than $\psi_x$, and hence is not a geodesic.  Thus every geodesic from 0 to $\hat p_x$ has $s$ NE and $r-s$ E steps. 
Since success occurs at stage $k$, any diagonal bonds in $\hat \gG_{0x}[0,\hat p_x] \backslash \hat H_{1,k}$ have passage time strictly more than $\sqrt{2}(1+\eta^{k})$, making $\hat \gG_{0x}[0,\hat p_x]$ strictly slower than $\psi_x$, which contradicts the fact that $\hat \gG_{0x}[0,\hat p_x]$ is a geodesic.  It follows that we must have $\hat \gG_{0x}[0,\hat p_x] = \psi_x$, which means $\hat \Gamma_{0x}$ indeed follows $A_1$ from 0 to $\hat U_k$.  

In case (ii), we have $s=2^{k-1}+1$ and $0\leq r-s \leq\hat  X_{1,k}$.  Let $q$ be the number of NE steps in $\hat \gG_{0x}[0,\hat p_x]$, so it must have $s-q$ N and $r-q$ E steps.
Each of the diagonal bonds has passage time at least $\sqrt{2}(1+\eta^{k-1})$, so its passage time satisfies
\[
  T(\hat \gG_{0x}[0,\hat p_x]) \geq \sqrt{2}(1+\eta^{k-1})q + (s-q) + (r-q) \geq \sqrt{2}(1+\eta^{k-1})q + 2(s-q) \geq \sqrt{2}(1+\eta^{k-1})s.
\]
By contrast, the northward segment of $\psi_x$ has length $\hat X_{1,k}-(r-s)$, so
\begin{align*}
  T(\psi_x) &\leq 2\hat X_{1,k} - (r-s) + \sqrt{2}(1+\eta^{k})(r-\hat X_{1,k}) \\
  &\le 2 \hat X_{1,k} + \sqrt{2}(1+\eta^{k})s \\
  & \leq \sqrt{2}(1+\eta^{k})s + \frac{2 C}{r_k}\, .
\end{align*}
Hence by \eqref{assumps},
\[
  T(\hat \gG_{0x}[0,\hat p_x]) - T(\psi_x) \geq \sqrt{2}(\eta^{k-1}-\eta^{k})s - \frac{2C}{r_k} 
    = (2^{k-1}+1)\sqrt{2}(\eta^{k-1}-\eta^{k}) - \frac{2C}{r_k} > 0.
\]
But this contradicts the fact that $\hat \Gamma_{0x}$ is  a geodesic.  Thus we cannot have $\hat p_x$ in the horizontal part of $\hat G_k$ -- and similarly not in the vertical part.

In case (iii), since the unique geodesic between any two points of $A_1$ is a segment of $A_1$, it is straightforward that we must have $\hat \gG_{0x}[0,\hat p_x] = \psi_x$. Again, $\hat \Gamma_{0x}$  follows $A_1$  from $0$ to $\hat U_k$.


\smallskip
\noindent
{\it Step 3. Conclusion.}
 If $\hat \Gamma$ is any semi-infinite geodesic from 0 which has infinitely many points in the (closed) first quadrant, then for each of the infinitely many $k$ for which $\hat F_k$ occurs, the initial segment of $\hat \Gamma$ must follow an axis from the origin to $\hat U_k$ or $\hat V_k$.  But since $\hat X_{1,k},\hat X_{2,k} \to \infty$, this means $\hat \Gamma$ itself must be one of these axes. It follows that all horizontal and vertical lines are semi-infinite geodesics, and no other paths.  
\end{proof}

\section{Modification for square lattice, finite energy, and unique geodesics}
\label{sec:main}

The preceding simpler example does not satisfy A2 (iv) or (v), and it does not allow the use of results known only for the usual square lattice. 
The first difficulty is to adapt our construction of Section~2 so that it does not have diagonal bonds:
we remove the diagonal bonds, and we replace diagonal highways with \emph{zigzag highways} (alternating horizontal and vertical steps) as done in \cite{HM95}.
Then, in order to verify the upward finite energy, we need to introduce horizontal/vertical highways, and make highways of class $k$ not have a fixed length. In order to ensure  the unique geodesics condition, we add auxiliary randomization. All together, this adds significant complications.  Primarily, since the graph is planar, there is sharing of bonds between, for example, horizontal and zigzag highways where they cross. Since the passage times are different in the two types of highways, each shared bond slows or speeds the total passage time along at least one of the highways, compared to what it would be without the other highway.  We must ensure that the number of such crossings is not a primary determinant of which paths are geodesics.

Once we have properly define a passage time configuration (the \emph{fast diagonals} FPP), our strategy will be similar to that of the simple example of Section \ref{sec:simple}: we will define a ``success'' event, and show that when a success occurs geodesics approximately follow an axis for a long distance, at least until they reach a very fast  zigzag highway.

\subsection{Definition of the \emph{fast diagonals} FPP process}

 We now select parameters $\eta,\tgh,\theta,\tgt,\mu$: $\theta^k$ (resp.~$\tilde \theta^k$) will roughly correspond to the densities of zigzag (resp.~horizontal/vertical) highways of class $k$, defined below, and $\eta^k$ (resp.~$\tilde\eta^k$) will roughly correspond to the slowdown of a zigzag (resp.~horizontal/vertical) highway --- that is the higher the class, the faster the highway. We will choose the parameters so that the horizontal/vertical highways are not too infrequent relative to zigzag ones ($\tilde \theta^{c_\theta} > \theta$ for a certain power $c_\theta<1$)
and are less slowed down ($\tilde \eta <\eta$) than zigzag ones.  Loosely speaking we want the passage times to be much more affected by the zigzag highways encountered than by the horizontal/vertical ones, but fast zigzag highways have to be easily reachable by nearby horizontal/vertical highways, so the choice of parameters must be precise.

The actual choice of the parameters is as follows: we choose $c_{\theta} \in (0.4,0.5)$ and $c_{\tilde \theta} ,\gd >0$ to verify
\begin{equation}\label{params}
  \theta=2^{-c_\theta },\quad \tgt = 2^{-c_{\tgt}}, \quad \theta^{c_\theta c_{\tgt}/(1-c_\theta (c_{\tgt}-4\delta))} < \eta 
    < \min \Big(\frac 78,\theta^{2/3} \Big),\quad \tgh < \min\Big( \frac{1}{2\theta}, \eta/2\Big) , 
\end{equation}
and
\begin{equation}\label{params2}
    \theta < \mu < \min(\tgt^{c_\theta },\eta,(\theta\eta)^{1-c_\theta (c_{\tgt}-4\delta)}\theta^{-4c_\theta \delta}).
\end{equation}
To see that this choice of parameters is possible, notice that $c_\theta /(1-c_\theta )>2/3$ and $\theta^{c_\theta /(1-c_\theta )} < 7/8$ so we can choose $0<4\delta<c_{\tgt}<1$ such that $c_\theta c_{\tgt}/(1-c_\theta (c_{\tgt}-4\delta))>2/3$ and $\theta^{c_\theta c_{\tgt}/(1-c_\theta (c_{\tgt}-4\delta))} < 7/8$.
Note that since $c_\theta c_{\tgt}<0.5$, the third condition in \eqref{params} guarantees $\eta>\theta$; since $c_\theta<0.5$ this also means
\begin{equation}\label{etatheta}
  2\eta\theta >2\theta^2>1,
\end{equation}
 guaranteeing that one can make a modification of \eqref{assumps} hold also here:
\begin{equation}\label{assumps2}
  0.1 \cdot 2^k(\eta^{k-1}-\eta^{k}) > \frac{4C}{r_k} + 0.2 \quad \text{for all } k\geq k_0
\end{equation} 
by choosing $k_0$ large enough. 
Further, the first inequality in that third condition in \eqref{params} is equivalent to $\theta < (\theta\eta)^{1-c_\theta (c_{\tgt}-4\delta)}\theta^{-4c_\theta \delta}$.  Together these show $\mu$ can be chosen to satisfy~\eqref{params2}.

\smallskip
{\it Highways and types of bonds.}
A \emph{zigzag highway} is a set of (adjacent) bonds in any finite path which either (i) alternates between N and E steps, starting with either, called a SW/NE highway, or (ii) alternates between N and W steps, starting with either, called a SE/NW highway.  If the first step in the path is N, we say the highway is \emph{V-start}; if the first step is W or E we say it is \emph{H-start}. A SW/NE highway is called \emph{upper} if it is above the main diagonal, and \emph{lower} if it is below, and analogously for SE/NW highways.
Note a SW/NE highway is not oriented toward SW or NE, it is only a set of bonds, and similarly for SE/NW.  The \emph{length} $|H|$ of a highway $H$ is the number of bonds it contains.  To each zigzag highway $H$ we associate a random variable $\mathcal{U}_H$ uniformly distributed in $[0,1]$ and independent from highway to highway.

For each $k\geq 1$ we construct a random configuration $\omega^{(k)}$ of zigzag highways of class $k$: these highways can have any length $1,\dots,2^{k+3}$. Formally we can view $\omega^{(k)}$ as an element of $\{0,1\}^{\mathbb{H}_k}$, where $\mathbb{H}_k$ is the set of all possible class-$k$ zigzag highways; when a coordinate is 1 in $\omega^{(k)}$ we say the corresponding highway is \emph{present} in $\omega^{(k)}$.  
To specify the distribution, for each length $j\leq 2^{k+3}$ and each $x\in\ZZ^2$, a SW-most endpoint of a present length-$j$ $H$-start SW/NE highway of class $k$ occurs at $x$ with probability $\theta^k/2^{2k+4}$, independently over sites $x$ and classes $k$, with the same for $V$-start, and similarly for SE/NW highways.  We write $\omega=(\omega^{(1)},\omega^{(2)},\dots)$ for the configuration of zigzag highways of all classes. Note that due to independence, for $k< l$, a given zigzag highway (of length at most $2^{k+3}$) may be present when viewed as a class-$k$ highway, and either present or not present when viewed as a class-$l$ highway, and vice versa. Formally, then, a present highway in a configuration $\omega$ is an ordered pair $(H,k)$, with $k$ specifying a class in which $H$ is present, but we simply refer to $H$ when confusion is unlikely.

A \emph{horizontal highway} of length $j$ is a collection of $j$ consecutive horizontal bonds, and similarly for a \emph{vertical highway}.  Highways of both these types are called \emph{HV highways}.
For each $k\geq 1$ we construct a configuration $\tgo^{(k)}$ of HV highways of class $k$: these highways can have any length $1,\dots,2^k$, and for each length $j\leq 2^k$ and each $x\in\ZZ^2$, a leftmost endpoint of a present length-$j$ horizontal highway of class $k$ occurs at $x$ with probability $\tgt^k/2^{2k}$, independently over sites $x$, and similarly for vertical highways.  We write $\tgo = (\tgo^{(1)},\tgo^{(2)},\dots)$ for the configuration of HV highways of all classes.

\smallskip
We now combine the classes of zigzag highways and ``thin'' them into a single configuration by deletions ---we stress that following our definitions, each site is a.s.\ in only finitely many highways.  We do the thinning in two stages, first removing those which are too close to certain other zigzag highways, then those which are crossed by a  HV highway with sufficiently high class.

Specifically, for \emph{stage-1 deletions} we define a linear ordering (a \emph{ranking}) of the SW/NE highways present in $\omega$, as follows. Highway $(H',k)$ ranks above highway $(H,l)$ if one of the following holds:
(i) $k>l$;
(ii) $k=l$ and $|H'|>|H|$;
(iii)  $k=l$, $|H'|=|H|$, and $\mathcal{U}_{H'}>\mathcal{U}_H$.  
Let $d_1(A,B)$ denote the $\ell^1$ distance between the sets $A$ and $B$ of sites or bonds. 
We then delete any SW/NE highway $(H,k)$ from any $\omega^{(k)}$ if there exists another present SW/NE highway $(H',l)$, with $d_1(H,H')\leq 22$ which ranks higher than $(H,k)$.  
We then do the same for SE/NW highways.  
The configuration of highways that remain in some $\omega^{(k)}$ after stage-1 deletions is denoted $\omega^{\rm zig,thin,1}$. 
Here the condition $d_1(H,H')\leq 22$ is chosen to follow from $d_1(H,H')<3/(1-\eta)-1$; we have chosen $\eta<7/8$ in \eqref{params} so the value 22 works.

For \emph{stage-2 deletions}, we let $\zeta = \delta/(c_{\tgt}-\delta)$, and for each $m\geq 1$ we delete from $\omega^{\rm zig,thin,1}$ each zigzag highway of class $m$ which shares a bond with an HV highway of class $(1+\zeta)m/c_{\tgt}$ or more (as is always the case when such highways intersect, unless the intersection consists of a single endpoint of one of the highways.)  The configuration of highways that remain in some $\omega^{(k)}$ after both stage-1 and stage-2 deletions is denoted $\omega^{\rm zig,thin,2}$.  For a given zigzag highway $(H,m)$ in $\omega^{\rm zig,thin,1}$, for each bond $e$ of $H$ there are at most $2^\ell$ possible lengths and $2^\ell$ possible endpoint locations for a class-$\ell$ HV highway containing $e$, so
the probability $(H,m)$ is deleted in stage 2 is at most 
\begin{equation}\label{stage2prob}
  2^{m+3} \sum_{\ell\geq (1+\zeta)m/c_{\tgt}} 2^{2\ell}\cdot \frac{\tgt^\ell}{2^{2\ell}} = \frac{8}{1-\tgt}\ 2^{-\zeta m}.
\end{equation}

Following stage-2 deletions we make one further modification, which we call \emph{stage-3 trimming}. Suppose $(H,k),(H',l)$ are zigzag highways of opposite orientation (SW/NE vs SE/NW) in $\omega^{\rm zig,thin,2}$, and $x$ is an endpoint of $H$.  If $d_1(x,H')\leq 1$, then we delete from $H$ the 4 final bonds of $H$, ending at $x$, creating a shortened highway $\hat H$.  (Formally this means we delete $(H,k)$ from $\omega^{\rm zig,thin,2}$ and make $(\hat H,k)$ present, if it isn't already.)  The resulting configuration is denoted $\omega^{\rm zig,thin}$. This ensures that for any present SW/NE zigzag highway $H$ and SE/NW zigzag highway $H'$, either $H$ and $H'$ \emph{fully cross} (meaning they intersect, and there are at least 2 bonds of each highway on either side of the intersection bond) or they satisfy $d_1(H,H')\geq 2$.

\smallskip
This construction creates several types of bonds, which will have different definitions for their passage times.
A bond $e$ which is in no highway in $\omega^{\rm zig,thin}$ but which has at least one endpoint in some highway in $\omega^{\rm zig,thin}$ is called a \emph{boundary bond}.  A bond $e$ in any highway in $\omega^{\rm zig,thin}$ is called a \emph{zigzag bond}.
A \emph{HV bond} is a bond in some HV highway in some $\tgo^{(k)}$.  An \emph{HV-only bond} is an HV bond which is not a zigzag bond.  
A \emph{backroad bond} is a bond which is not a zigzag bond, HV-only bond, or boundary bond.

Moreover, there are special types of boundary and zigzag bonds  that arise when a SW/NE zigzag highway crosses a SE/NW one, so we need the following definitions for bonds in $\omega^{\rm zig,thin}$. For zigzag bonds, we distinguish:
\begin{itemize}
\item[(i)] The first and last bonds (or sites) of any zigzag highway are called \emph{terminal bonds} (or \emph{terminal sites}.)  A bond which is by itself a length-1 highway is called a \emph{doubly terminal bond}; other terminal bonds are \emph{singly terminal bonds}.  Bonds which are not the first or last bond of a specified path are called \emph{interior bonds}.
\item[(ii)] An adjacent pair of zigzag bonds in the same direction (both N/S or both E/W, which are necessarily from different highways, one SW/NE and one SE/NW) is called a \emph{meeting pair}, and each bond in the pair is a \emph{meeting zigzag bond}.  
\item[(iii)] A zigzag bond for which both endpoints are meeting-pair midpoints is called an \emph{intersection zigzag bond}. Equivalently, when a SW/NE highway intersects a SE/NW one, the bond forming the intersection is an intersection zigzag bond. 
\item[(v)] A zigzag bond which is not a meeting, intersection, or terminal zigzag bond is called a \emph{normal zigzag bond}.
\end{itemize}
For boundary bonds, we distinguish the following:
\begin{itemize}
\item[(vi)] A boundary bond is called a \emph{semislow boundary bond} if either (a) it is adjacent to two meeting bonds (and is necessarily parallel to the intersection bond, separated by distance 1), called an \emph{entry/exit bond}, or (b) it is adjacent to an intersection bond. 
\item[(vi)] A boundary bond $e$ is called a \emph{skimming boundary bond} if one endpoint is a terminal site of a zigzag highway, and the corresponding terminal bond is perpendicular to $e$.
\item[(vii)] A boundary bond which is not a semislow or skimming boundary bond is called a \emph{normal boundary bond}.
\end{itemize}
These special type of bonds are represented in Figure \ref{fig:crossing}.
\begin{figure}[htbp]
\begin{center}
\includegraphics[scale=1]{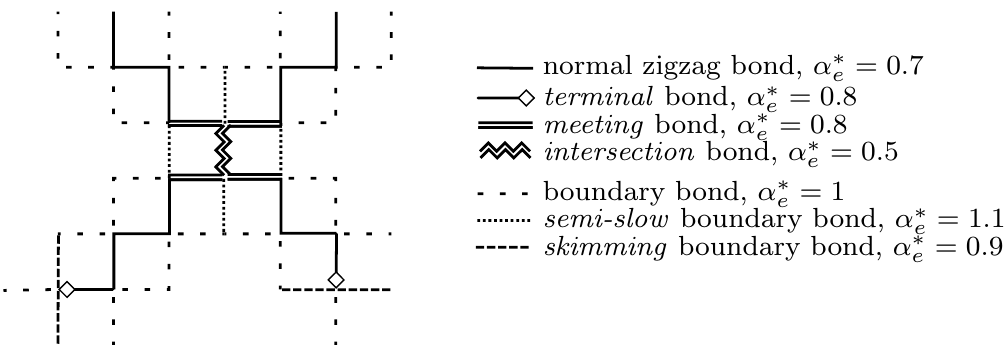}
\end{center}
\caption{\footnotesize Different types of bonds near a crossing of two zigzag highways, and their compensated core passage times, see \eqref{compcore}.
}
\label{fig:crossing}
\end{figure}

\smallskip
{\it Definition of the passage times.}
For each edge $e$ we associate its zigzag class 
\begin{equation*}
  k(e)= \begin{cases} \max\{k: e \text{ is in a class-$k$ zigzag highway in }\omega^{\rm zig, thin} \} &\text{if this set is nonempty}, \\ 0, &\text{otherwise}, \end{cases}
\end{equation*}
and its HV class
\begin{equation*}
  \tk(e)= \begin{cases} \max\{k: e \text{ is in a class-$k$ HV highway in }\tilde \omega^{(k)} \} &\text{if this set is nonempty}, \\ 0, &\text{otherwise}. \end{cases}
\end{equation*}
We then define a bond $e$ to be \emph{slow} if 
\begin{equation}\label{slowdef}
  \xi_e' \leq 4^{-k(e)\vee \tk(e)},
\end{equation}
where $\xi_e'$ is uniform in $[0,1]$, independent from bond to bond.
Slow bonds exist only to ensure that upward finite energy holds.

We next define the \emph{raw core passage time} $\alpha_e=\alpha_e(\tgo,\omega^{\rm zig,thin})$ of each bond $e$, mimicking ideas of the simple example of Section \ref{sec:simple}.  For non-slow $e$ we set
\begin{equation}\label{uncomp}
  \alpha_e = \begin{cases} 
    0.7 &\text{if  $e$ is a zigzag bond}\\
    0.9 &\text{if  $e$ is an HV-only bond}\\
    1 &\text{if $e$ is a backroad bond or non-HV boundary bond},
    \end{cases}
\end{equation}
and for slow $e$, we set $\alpha_e=1.2$.
Then we define the \emph{compensated core passage time} $\alpha_e^*=\alpha_e^*(\omega,\omega^{\rm zig,thin})$ for non-slow $e$ by
\begin{equation}\label{compcore}
  \alpha_e^* = \begin{cases} 
    0.5 &\text{if $e$ is an intersection zigzag bond}\\
    0.7 &\text{if  $e$ is a normal zigzag bond}\\
    0.8 &\text{if $e$ is a meeting bond, or a singly terminal zigzag bond}\\
    0.9 &\text{if  $e$ is  a doubly terminal zigzag bond, or an HV-only bond which is either }\\
    &\qquad \text{non-boundary or a skimming boundary bond}  \\
    1 &\text{if $e$ is a backroad bond, or  a normal boundary bond}\\
    1.1 &\text{if $e$ is a semislow boundary bond},
    \end{cases}  
\end{equation}
and for slow $e$, we set  $\alpha_e^*=1.3$. 
The term ``compensated'' refers mainly to the following:  when an HV highway crosses a zigzag one, it typically intersects one normal zigzag bond and two normal boundary bonds.  The sum of the raw core passage times for these 3 bonds is $0.9+0.7+0.9=2.5$, whereas in the absence of the zigzag highway the sum would be $3\times0.9=2.7$.  With the compensated times, the sum is restored to 2.7, and in that sense the HV highway does not ``feel'' the zigzag highway. 
The compensation picture is more complicated when the HV highway crosses near the intersection of a SW/NE highway and a SE/NW highway (which necessarily fully cross.) It must be done so that \eqref{alphasumA} and \eqref{alphasum} below hold, whether the HV highway contains intersection bonds, meeting bonds or entry/exit bonds, see Figure~\ref{fig:crossing}.

In a similar sense, a SW/NE highway does not ``feel'' a crossing by a SE/NW highway. 


The idea in the definition of $\alpha_e^*$ is that we compensate for the ``too fast'' zigzag bonds in an HV highway (0.7 vs 0.9) by extracting a toll of 0.1 for entering or exiting a zigzag highway.  If the entry/exit is through a terminal or meeting zigzag bond (as when passing through a meeting block), then the toll is paid by increasing the time of that bond from 0.7 to 0.8.  In the meeting case, to avoid increasing the total time along the zigzag highway, the core passage time of the adjacent intersection bond is reduced to 0.5.  If the entrance/exit for the zigzag highway is made through any other type of zigzag bond, the toll is paid by increasing the adjacent boundary bond in the path from 0.9 (if it's an HV bond) to 1.  There is an exception in an entry/exit bond, which may be both entrance and exit: the toll for such a bond is 0.2.

Due to the stage-1 deletions, any two parallel zigzag highways $H,H'$ in $\omega^{\rm zig,thin}$ satisfy 
\begin{equation}\label{six}
  d_1(H,H')\geq 23,
\end{equation}
 and as a result, the compensation described by \eqref{compcore} is sufficient for our purposes; without a lower bound like \eqref{six}, more complicated highway-crossing situations would be possible, producing for example meeting zigzag bonds which are also intersection zigzag bonds.  

We thus have the following property:  suppose $H$ is an HV highway for which the first and last bonds are HV-only, and $\Gamma$ is a path which starts and ends with non-zigzag bonds.  Then 
\begin{equation}\label{alphasumA}
  \sum_{e\in H} \alpha_e^* = 0.9|H|, \qquad \sum_{e\in\Gamma} \alpha_e^* \geq 0.7|\Gamma|.
\end{equation}
End effects may alter this for general HV highways and paths, but it is easily checked that every HV highway $H$ and path $\Gamma$ satisfies
\begin{equation}\label{alphasum}
  \Big| \sum_{e\in H} \alpha_e^* - 0.9|H| \Big| \leq 0.4, \qquad \sum_{e\in\Gamma} \alpha_e^* \geq 0.7|\Gamma| - 0.2.
\end{equation} 

We use another auxiliary randomization to ensure unique geodesics:  for each bond $e$ we let $\xi_e$ be uniform in $[0,1]$, independent from bond to bond (and independent of the $\xi_e'$'s used above).
We can now define the full passage times $\tau_e$ (based on the configurations $\omega^{\rm zig,thin}$ and $\tilde \omega$) by 
\begin{equation}\label{taudef}
  \tau_e= \alpha_e^* + \sigma_e,
\end{equation}
where $\sigma_e$ (the slowdown) is defined to be $0.1 \xi_e$ if $e$ is a slow bond, and for non-slow $e$ 
\begin{equation}\label{defsigmae}
\sigma_e = 0.1 \times
\begin{cases}
\eta^{k(e)} + \tgh^{k(e)}\xi_e &\text{if $e$ is a zigzag bond,} \\
\tgh^{\tk(e)} + \tgh^{\tk(e)}\xi_e &\text{if $e$ is an HV-only bond,}\\
\tgh^{\tk(e)}\xi_e &\text{if $e$ is either a backroad bond, or a boundary bond which is not HV.}
\end{cases}
\end{equation} 
Hence, this corresponds to a slowdown of order $\eta^k$ (per bond) in class-$k$ zigzag higways, and of order $\tgh^{\tilde k}$ (per bond) in class-$\tilde k$ HV-highways.
We refer to the resulting stationary FPP process as the \emph{fast diagonals FPP process}. 
We stress that the presence of the independent variables $\xi_e$ ensures that Assumption A2(iv) is satisfied, and (since $\sigma_e\leq 0.2$ in all cases) the presence of slow bonds ensures the positive finite energy condition  A2(v); the rest of A1 and A2 are straightforward.

\smallskip
{\it First observations and notations.}
It is important that since $\eta>\tgh$, passage times along long zigzag highways are much more affected by the class of the highway than are times along HV highways.   In fact, by increasing $k_0$ we may assume $(2\tgh)^{k_0}<\eta^{k_0}<1/16$, recalling we chose $\tilde \eta<\eta/2$. Then
if $H$ is an HV highway of class $k\geq k_0$  (so of length at most $2^{k+3}$), we have
\begin{equation}\label{tenthbound}
 0.1 \sum_{e\in H} (\tgh^{k} + \tgh^{k}\xi_e) \leq 1.6 (2\tgh)^k < 0.1,
\end{equation}  
so the maximum effect on $\sum_{e\in H} \tau_e$ of all the variables involving $\tgh$ is less than 0.1, hence is less than the effect of the $\alpha_e^*$ value for any single bond $e$.

\smallskip
Henceforth we consider only $k\geq k_0$, n setting the scale of $\hat\Omega_k$. Let $r_k = \sum_{j=k}^\infty \gt^j$ and $\tr_k = \sum_{j=k}^\infty \tgt^j$.  
Define $q_k = \log_2(2C/r_k) = c_\theta k+b$ where $b$ is a constant.

The following subsections prove Theorem \ref{main}.  The strategy is similar to that of Section \ref{sec:simple}: we first construct in Section \ref{sec:construct} an event $ F_k$ that a.s.\ occurs for a positive fraction of all $k$'s, and then show in Section \ref{sec:properties} that when $ F_k$ occurs geodesics have to stay near the axis. We conclude the proof of Theorem~\ref{main} in Section~\ref{sec:i-iii}.

\subsection{Construction of a ``success'' event}
\label{sec:construct}
Analogously to Section \ref{sec:simple}, we construct a random region $\gO_k$ (which is an enlarged random version of $\hat \Omega_k$), and a deterministic region $\Theta_k$ which may contain $\gO_k$, as follows.

 As before we write $A_1,A_2$ for the positive horizontal and vertical axes, $Q$ for the first quadrant, and now also $A_3, A_4$ for the negative horizontal and vertical axes, respectively.  We write $G_k^1$ for the set $\{x\in Q:\Delta(x) = 2^k\sqrt{2}\}$ (formerly denoted $\hat G_k$)  and $\tilde G_k^1$ for $\{x\in Q:\Delta(x) = (2^k+4)\sqrt{2}\}$; successively rotating the lattice by 90 degrees yields corresponding sets $G_k^2,G_k^3,G_k^4$ and $\tilde G_k^2,\tilde G_k^3,\tilde G_k^4$ in the second, third and fourth quadrants, respectively.
Let $H_{NE,L,k}$ and $H_{NE,U,k}$ be the lower and upper zigzag highways in $\omega^{\rm zig,thin,2}$ of class $k$ or more, intersecting both $\tilde G_k^1$ and $\tilde G_k^3$, which intersect $A_1$ and $A_2$, respectively, closest to $0$.  Let $U_k^{NE} = (X_{1,k}^{NE},0)$ be the leftmost point of $A_1\cap H_{NE,L,k}$, and $V_k^{NE}=(0,X_{2,k}^{NE})$ the lowest point of $A_2\cap H_{NE,U,k}$.  Rotating the lattice 90 degrees yields analogous highways $ H_{NW,L,k}$ and $ H_{NW,U,k}$ each intersecting $\tilde G_k^2$ and $ \tilde G_k^4$, and intersections points $U_k^{NW} = (X_{1,k}^{NW},0)$ and $V_k^{NW}=(0,X_{2,k}^{NW})$ with axes $A_3$ and $A_2$, respectively.  Here we have used $\tilde G_k^i$ and not $G_k^i$ so that stage-3 trimming does not prevent $H_{*,\cdot,k}$ from reaching appropriate $G_k^i$.


Let $\ell_{1,k}$ and $\ell_{3,k}$ be the vertical lines $\{\pm 2C/r_k\}\times\RR$ crossing $A_1$ and $A_3$ respectively, and $\ell_{2,k}$ and $\ell_{4,k}$ the horizontal lines $\RR\times\{\pm 2C/r_k\}$ crossing $A_2$ and $A_4$.
Let $J_{N,k}$ (and $J_{S,k}$, respectively) denote the lowest (and highest) horizontal highway above (and below) the horizontal axis intersecting both $\ell_{1,k}$ and $\ell_{3,k}$. Analogously, let $J_{E,k}$ (and $J_{W,k}$) be the leftmost (rightmost) vertical highway to the right (left) of the vertical axis  intersecting both $\ell_{2,k}$ and $\ell_{4,k}$.  Let $(Y_{E,k},0)$ be the intersection of $J_{E,k}$ with $A_1$, and analogously for $(0,Y_{N,k}),(Y_{W,k},0)$ and $(0,Y_{S,k})$ in $A_2,A_3$, and $A_4$.

Let $\gO_k^{NE}$ be the open region bounded by $H_{NE,L,k},H_{NE,U,k},G_k^1$, and $G_k^3$, and let $\gO_k^{NW}$ be the open region bounded by $H_{NW,L,k},H_{NW,U,k},G_k^2$, and $G_k^4$.  Then let $\gO_k=\gO_k^{NW}\cup\gO_k^{NE}$ (an $X$-shaped region), see Figure \ref{fig:Regions} below.

Let $h_{NE,L,k}$ and $h_{NE,U,k}$ denote the SW/NE diagonal lines through $(C/r_k,0)$ and $(0,C/r_k)$ respectively, and let $h_{NW,L,k}$ and let $h_{NW,U,k}$ denote the SE/NW diagonal lines through $(-C/r_k,0)$ and $(0,C/r_k)$, respectively.
Let $\Theta_k^{NE}$ denote the closed region bounded by $h_{NE,L,k},h_{NE,U,k},G_k^1$ and $G_k^3$, and $\Theta_k^{NW}$ the closed region bounded by $h_{NW,L,k},h_{NW,U,k},G_k^2$ and $G_k^4$.  Then let $\Theta_k=\Theta_k^{NW}\cup\Theta_k^{NE}$.  In the event of interest to us, the nonrandom region $\Theta_k$ will contain the random region $ \Omega_k$.

\begin{figure}[htbp]
\begin{center}
\includegraphics[scale=.8]{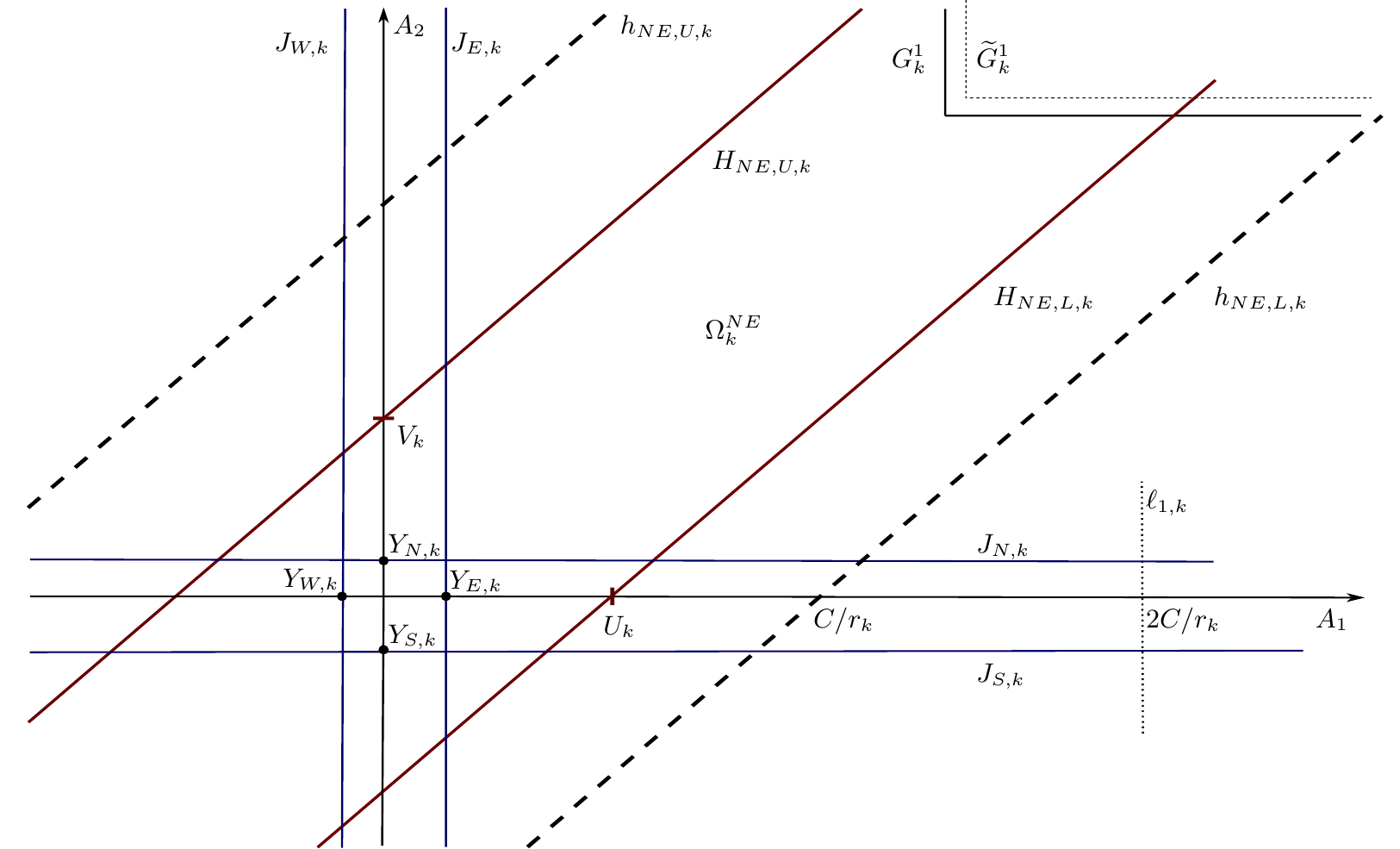}
\end{center}
\caption{\footnotesize Representation of the different regions of interest (the picture focuses on the upper-right quadrant, and the scales are not respected): $ \Omega_k^{NE}$ is the shaded region enclosed by $  H_{NE,U,k},  H_{NE,L,k}$ and $ G_k^i$ ($i=1,3$);   $\Theta_k^{NE}$ is the region enclosed by $  h_{NE,U,k},  h_{NE,L,k}$ and $ G_k^i$ ($i=1,3$). The construction of $ \Omega_k^{NW}$ and $\Theta_k^{NW}$ is symmetric by rotation of 90$^\circ$.}
\label{fig:Regions}
\end{figure}

\medskip
We now construct an event $F_k$, which we will show occurs for a positive fraction of all $k$, a.s.  We show in Section \ref{sec:properties} that $F_k$ ensures that all finite geodesics from 0 to points outside $\gO_k$ must stay near one of the axes until leaving $\gO_k^{NW}\cap\gO_k^{NE}$. 
For $m>c_{\tgt}q_k/(1+\zeta)$ let $R_{k,m}^*$ be the number of class-$m$ zigzag highways in $\omega^{(m)}$ intersecting $J_{*,k}$ in $\Theta_k$, for $*=$ N, E, S, W.  Here $\zeta$ is from the definition of stage-2 deletions. Note that since the class of $J_{*,k}$ is at least $q_k$, any intersecting highways  of class $m\leq c_{\tgt}q_k/(1+\zeta)$ are removed in stage-2 deletions, so $R_{k,m}^*$ counts those which might remain (depending on the class of $J_{*,k}$).

  Fix $c\in (0,C/2)$ and define the events
\[
   I_k^*:=\Big\{ \,  \frac{c}{r_k} \leq X_{i,k}^* \leq \frac{C}{r_k} \text{ for $i=1,2$}\, \Big\}, \text{ for $*=$ NE, NW}, \qquad I_k = I_k^{NE}\cap I_k^{NW},
\]
\begin{align*}
  M_k^{NE}:= 
  \bigg\{
  \begin{minipage}{8.5cm}
every SW/NE highway $H\notin \{H_{NE,L,k},H_{NE,U,k}\}$ in $\omega$ intersecting $\Theta_k^{NE}$  is in classes $1,\dots,k-1$
  \end{minipage}
  \bigg\}\, ,
  \quad \text{and analogously for } M_k^{NW}, 
\end{align*}
and let $M_k = M_k^{NE}\cap M_k^{NW}.$ Define also
\[
 \tD_k: |Y_{*,k}| \leq \frac{C}{\tr_{q_k}} \text{ for } *=E,N,W,S.
\]
\[
  E_{1,k}: \sum_{m>c_{\tgt}q_k/(1+\zeta)} R_{k,m}^*\eta^m \leq c_1(\eta^3\theta^{-2})^{c_\theta \delta k} (\mu^{-1}\eta)^k, \text{ for } *=E,N,W,S.
\]
with $c_1$ to be specified and $\delta$ from \eqref{params}, noting that by the bound on $\eta$ in \eqref{params} we have $\eta^3\theta^{-2}<1$, and
\begin{align*}
  E_{2,k}: &\text{ there are no slow bonds in any $J_{*,k}$ ($*=$ N, E, S, W)}\\
    &\qquad\text{nor in any $H_{*,\cdot,k}$ ($*=$ NE, NW, $\cdot=$ U, L)}.
\end{align*}
Finally, we set $F_k = I_k \cap M_k \cap \tD_k \cap E_{1,k} \cap E_{2,k}$. Note that when $F_k$ occurs we have $\gO_k\subset\Theta_k$.

\begin{lemma}
\label{lem:success}
There exists some $\kappa_1$ such that
\begin{equation}\label{Fkdens}
  \liminf_{n\to\infty}   \frac 1n \sum_{k=1}^n 1_{F_k} \geq\kappa_1 \quad {\rm a.s.}
  \end{equation}
 Moreover, letting $n_1(\go)<n_2(\go) <\cdots$ be the indices for which $\go\in  F_k$, we have
 \begin{equation}\label{nj}
 \limsup_{j\to\infty} \frac{n_{j+1}}{n_{j}} =1 \quad {\rm a.s.}
\end{equation}  
\end{lemma}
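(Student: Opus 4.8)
The plan is to show that $\bP(F_k)$ is bounded below uniformly in $k\geq k_0$, and then upgrade this to the stated almost-sure density and gap statements via a conditional version of the second Borel--Cantelli lemma, exactly as in the proof of \eqref{claim} in Section \ref{sec:simple}. First I would estimate each of the five events defining $F_k=I_k\cap M_k\cap\tD_k\cap E_{1,k}\cap E_{2,k}$, conditioning them in a convenient order so that the relevant pieces of the highway configuration are independent.

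\emph{Lower bound on $\bP(F_k)$.} The event $I_k$ is handled as in Section \ref{sec:simple}: the highways counted in the definition of $X_{i,k}^{NE}$ (present class-$\geq k$ SW/NE highways through both $\tilde G_k^1$ and $\tilde G_k^3$) have density of order $r_k$ along $A_i$ by \eqref{classk}, so the first intersection point lies between $c/r_k$ and $C/r_k$ with probability bounded away from $0$ and $1$; the events for $i=1,2$ and for NE, NW involve disjoint sets of highways, so $\bP(I_k)\geq\lambda_I>0$. Conditionally on $I_k$, the event $M_k$ asks that no SW/NE (resp.\ SE/NW) highway of class $\geq k$ other than $H_{NE,\cdot,k}$ (resp.\ $H_{NW,\cdot,k}$) meet the region $\Theta_k$; since $\Theta_k$ has $\ell^1$-extent of order $1/r_k$ along the axes and highways of class $j\geq k$ have density $\leq\theta^j$, the expected number of such highways is $O(\sum_{j\geq k}\theta^j/r_k)=O(1)$, so by independence of highways this conditional probability is bounded below by a modification of the $e^{-2C}$ estimate in Section \ref{sec:simple}. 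The event $\tD_k$ bounds the axis-intercepts $|Y_{*,k}|$ of the HV highways $J_{*,k}$ of class $\geq q_k$ crossing $\ell_{1,k},\ell_{3,k}$ (resp.\ $\ell_{2,k},\ell_{4,k}$); these HV highways have density of order $\tr_{q_k}$ along the axis, so $\bP(\tD_k)$ is bounded away from $0$ by the same reasoning as for $I_k$ but with $\tgt,q_k$ in place of $\theta,k$, using that $q_k=c_\theta k+b$ is the chosen class scale. The event $E_{2,k}$ asks that none of the (finitely many, $O(1/r_k)$ in total length) bonds of the chosen highways $J_{*,k}$ and $H_{*,\cdot,k}$ be slow; since a bond in a class-$m$ highway is slow with probability $4^{-m}$, and the relevant classes are $\geq k$ or $\geq q_k$, a union bound gives failure probability $O(1/r_k)\,4^{-k}\to 0$, so $\bP(E_{2,k})\to 1$. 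The only genuinely new ingredient is $E_{1,k}$: here one must bound $\sum_{m>c_{\tgt}q_k/(1+\zeta)}R_{k,m}^*\eta^m$, where $R_{k,m}^*$ counts class-$m$ zigzag highways crossing $J_{*,k}$ inside $\Theta_k$. Since $J_{*,k}$ has length $O(1/r_k)$, $\bE[R_{k,m}^*]=O(\theta^m/r_k)$, so $\bE\big[\sum_m R_{k,m}^*\eta^m\big]=O\big(r_k^{-1}\sum_{m>c_{\tgt}q_k/(1+\zeta)}(\theta\eta)^m\big)$, which is of order $(\theta\eta)^{c_{\tgt}q_k/(1+\zeta)}/r_k$; plugging in $q_k=c_\theta k+b$ and $\zeta=\delta/(c_{\tgt}-\delta)$ and comparing exponents using the parameter inequalities \eqref{params}--\eqref{params2} shows this mean is at most a constant times the right-hand side of $E_{1,k}$, so Markov's inequality makes $\bP(E_{1,k})$ as close to $1$ as desired for $k$ large (this is precisely what $c_1$ is chosen for). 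Combining, $\bP(F_k)\geq\lambda>0$ for all $k\geq k_0$.

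\emph{From positive probability to \eqref{Fkdens} and \eqref{nj}.} As noted after \eqref{claim}, the same estimates localize: $F_k$ depends (up to a failure event of small probability) only on highways and randomizations within $\ell^1$-distance $O(1/r_k)$ of the origin of classes $\geq c_\theta k/(1+\zeta)-O(1)$ and $\geq q_k-O(1)$, so for any fixed $\ell$ we have $\bP(F_k\mid\sigma(F_1,\dots,F_\ell))\geq\lambda/2$ once $k$ is large enough. Iterating, for any $j$ one can find $k>j$ arbitrarily close to $j$ (indeed $k/j\to 1$) with $\bP(F_k\mid\sigma(F_1,\dots,F_j))\geq\lambda/2$, which gives both $\bP(F_k\ \text{i.o.})=1$ and, by a standard conditional second Borel--Cantelli argument applied along a sufficiently slowly growing subsequence, the density bound $\liminf_n n^{-1}\sum_{k\leq n}1_{F_k}\geq\kappa_1$ for some $\kappa_1>0$, as well as $\limsup_j n_{j+1}/n_j=1$ a.s.\ (if the ratio were bounded away from $1$ infinitely often, the conditional probabilities of hitting $F_k$ in the intervening block would sum to infinity, forcing a success and a contradiction).

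\emph{Main obstacle.} The routine parts are $I_k,M_k,\tD_k,E_{2,k}$, which mirror Section \ref{sec:simple}. The delicate step is the bound on $E_{1,k}$: one must verify that the threshold $m>c_{\tgt}q_k/(1+\zeta)$ (coming from which zigzag highways survive stage-2 deletion against a class-$q_k$ HV highway) together with the decay rate $\eta^m$ beats the density blowup $1/r_k=O(\theta^{-k})$, and that the resulting exponent in $k$ matches $(\eta^3\theta^{-2})^{c_\theta\delta k}(\mu^{-1}\eta)^k$. This is exactly the point where the specific algebraic relations among $c_\theta,c_{\tgt},\delta,\eta,\mu,\theta$ in \eqref{params}--\eqref{params2} are used, and getting the exponent bookkeeping right — rather than any probabilistic subtlety — is the real work.
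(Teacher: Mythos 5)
Your overall plan (lower bound on $\bP(F_k)$, then upgrade to an almost-sure density statement via approximate decoupling) is the same as the paper's, and your estimates of the individual event probabilities $I_k, M_k, \tD_k, E_{1,k}, E_{2,k}$ are broadly in the right spirit. But there is a genuine gap exactly where you wave at ``a standard conditional second Borel--Cantelli argument applied along a sufficiently slowly growing subsequence.''

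The conditional second Borel--Cantelli lemma (L\'evy's extension) gives only that $F_k$ occurs infinitely often; it does not give a density bound, and \eqref{Fkdens} is strictly stronger than i.o. The step from ``$\bP(F_k\mid \sigma(F_1,\dots,F_\ell))\geq \lambda/2$ for each fixed $\ell$ once $k$ is large'' to ``$k/j\to 1$'' is precisely what is missing: ``$k$ large enough'' depends on how far the highway configuration near the origin at scale $\ell$ extends, and a priori that could push $k$ far past $\ell$. The paper handles this by introducing the random variable $\mC_k$ (the largest class of zigzag highway hitting $\Theta_k$), proving the geometric tail bound $\bP(\mC_k-k\geq n)\leq 32C\theta^n$, and building an explicit random renewal sequence $K_{i+1}=\max(\mC_{K_i},K_i+n_0)+1$ along which (a) the events $F_{K_i}^1$ stochastically dominate an i.i.d.\ Bernoulli sequence, and (b) $K_{i+1}-K_i$ is dominated by a constant plus a geometric, so that $\limsup_i K_i/i\leq c_2$ a.s.\ by the strong law. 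Both (a) and (b) are needed: (a) gives that a positive fraction of the $K_i$'s succeed, and (b) gives that the $K_i$'s themselves have positive density. Your proposal contains neither the renewal construction nor the tail control on $\mC_k$, so the density conclusion \eqref{Fkdens} — and by the same token \eqref{nj} — does not follow from what you have written.

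A secondary but real issue: the paper does not try to make $\bP(F_k)$ itself uniformly positive directly. It splits $F_k$ into a ``low-probability core'' $F_k^1=I_k^1\cap M_k^1\cap \tD_k$ defined using the pre--stage-2 configuration $\omega^{\rm zig,thin,1}$, for which the renewal argument yields a positive density, and ``high-probability corrections'' ($B_k$ for the stage-2 survival of the chosen $H_{*,\cdot,k}$, together with $E_{1,k},E_{2,k}$), for which it proves \emph{summability} of the complement probabilities and invokes Borel--Cantelli so that they eventually always hold. You state only $\bP(E_{1,k})\to 1$ and $\bP(E_{2,k})\to 1$; that is not enough to conclude that $E_{1,k}^c$ and $E_{2,k}^c$ occur only finitely often, which is what is actually used to pass from the density of $F_k^1$ to the density of $F_k$. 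You also do not address the stage-2 deletion issue at all (the event $B_k$), even though your $I_k$ is defined relative to $\omega^{\rm zig,thin,2}$.

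So: the event-by-event probability estimates are fine in outline, but the density and gap-ratio conclusions require the explicit renewal construction with controlled increments, which is the actual content of the paper's proof and is absent from your argument.
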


\begin{proof}
In the events $I_k$ and $M_k$, the highways $H_{*,\cdot,k}$ and associated values $X_{i,k}^*$ are taken from the configuration $\omega^{\rm zig,thin,2}$.  Using instead the configuration $\omega^{\rm zig,thin,1}$ yields different events, which we denote $I_k^1$ and $M_k^1$ respectively.  Let $F_k^1 = I_k^1 \cap M_k^1 \cap \tD_k$ (noting we do not intersect with $E_{1,k}, E_{2,k}$ here).
First, we prove the following.
\begin{clm}
\label{claim1}
There exists $\kappa_1>0$ such that 
\[
  \liminf_{n\to\infty}   \frac 1n \sum_{k=1}^n 1_{F_k^1} \geq\kappa_1 \quad {\rm a.s.}
  \]
\end{clm}
\begin{proof}[Proof of Claim \ref{claim1}]
As a first step, we show that
\begin{equation}\label{PIklower}
  \inf_{k\geq k_0} \bP(I_k^1\cap M_k^1)>0.
\end{equation}
For a lower SW/NE zigzag highway $H$, let $(x_0(H),0)$ be the intersection point in $A_1$ closest to $(0,0)$ when one exists, and similarly for an upper SW/NE zigzag highway $H$ let $(0,y_0(H))$ be the intersection point in $A_2$ closest to $(0,0)$. We call $H$ \emph{k-connecting} if $H$ intersects both $G_k^1$ and $G_k^3$. The event $I_k^1\cap M_k^1$ contains the event
\[
A_k: \quad
\begin{minipage}{14.3cm}
there exists exactly one lower SW/NE highway $H$ in $\omega$ of class $k$ or more intersecting $\Theta_k^{NE}$, and this $H$ is $k$-connecting and satsifies $\tfrac{c}{r_k}\leq x_0(H)\leq \tfrac{C}{r_k}-23$; further, the analogous statement holds for upper SW/NE highways with $y_0(\cdot)$ in place of $x_0(\cdot)$, and for lower and upper SE/NW highways. 
\end{minipage}
\]
The parts of $A_k$ for the 4 types of zigzag highways (upper vs lower, SW/NE vs SE/NW) are independent, so to bound the probability of $A_k$ we can consider just one of these parts and take the 4th power of the corresponding probability.  In particular, 
considering lower SW/NE highways $H$ of class $\ell\geq k$ intersecting $\Theta_k^{NE}$, there are at most $2^{\ell+3}$ possible lengths for $H$, at most $2^{\ell+3}C/r_k$ possible SW-most points, and the choice of $H$-start or $V$-start, so at most $2^{2\ell+7}C/r_k$ possible highways.  Also, considering lower SW/NE highways $H$ of class $m\geq k$ which are $k$-connecting and satisfy $\tfrac{c}{r_k}\leq x_0(H)\leq \tfrac{C}{r_k}-23$, there are at least $2^{m+1}(C-c)/r_k$ possible lower endpoints, and $2^{m+1}$ possible lengths, for $H$.
Therefore considering only lower highways, and since $c<C/2$ we have
\begin{align}\label{IkMk}
  \bP(I_k^1\cap M_k^1)^{1/4} \geq \bP(A_k)^{1/4} &\geq \sum_{m\geq k} \frac{2^{2m+2}(C-c)}{r_k}\ \frac{\theta^m}{2^{2m+4}} 
    \prod_{\ell\geq k} \left( 1 - \frac{\theta^\ell}{2^{2\ell+4}} \right)^{2^{2\ell+7}C/r_k} \notag\\
  &\geq  \left( \sum_{m\geq k} \frac{C\theta^m}{8r_k} \right) \exp\left( -\sum_{\ell\geq k}\frac{16C\theta^\ell}{r_k} \right) \notag\\
  &= \frac{C}{8}e^{-16C},
\end{align}
proving \eqref{PIklower}.

A similar but simpler proof also using a count of highways yields that $\inf_{k\geq k_0} \bP(\tD_k)>0$, so by independence we have
\begin{equation}\label{PFklower}
  \inf_{k\geq k_0} \bP(F_k^1)\geq \kappa_0>0
\end{equation}
for some $\kappa_0$.
But Claim \ref{claim1} is a stronger statement, and we now complete its proof, using \eqref{PFklower}.

\smallskip
Let $\mH_k$ denote the set of all SW/NE highways in $\omega$ which intersect $\Theta_k^{NE}$, together with all SE/NW highways which intersect $\Theta_k^{NW}$. Let $\mF_k$ denote the $\sigma$-field generated by $\mH_k$, and let $\mC_k$ denote the largest $j$ such that $\mH_k$ contains a highway of class $j$.  For $m\geq k$,  class-$m$ SW/NE highways intersecting $\Theta_k^{NE}$ have at most $(2C/r_k)(2^{m+3}+2^{k+3})$ possible SW-most endpoints, and $2^{m+3}$ possible lengths, and 2 directions for the initial step (V- or H-start), so the number of such highways 
is bounded by a sum of Bernoulli random variables, each with parameter (success probability) less than $1/2$ and with total mean at most 
\[
  2^{m+4}\frac{2C}{r_k}(2^{m+3}+2^{k+3})\frac{\theta^m}{2^{2m+4}} \leq \frac{32C\theta^m}{r_k}.
\]
The same is true for SE/NW highways.
Hence for $n\geq 0$ the number of highways of class at least $k+n$ is bounded by a similar sum of Bernoulli variables of total mean at most $32Cr_{k+n}/r_k$.  It follows that for any $n\geq 0$,
\begin{align}\label{Ckbound}
  \bP(\mC_k-k \geq n) &\leq \bP(\text{some highway in $\omega$ of class $k+n$ or more intersects $\Theta_k$})\notag\\
  & \leq \frac{32Cr_{k+n}}{r_k} = 32C\theta^n.
\end{align}

Let $n_0$ be the least integer with $\theta^{n_0}<c/C$.
Define a random sequence of indices $1=K_1<K_2<\dots$ inductively as follows:  
having defined $K_i$, let $K_{i+1}:=\max\{\mC_{K_i} , (K_i+n_0) \} +1$. Here $k\geq K_i+n_0$ ensures $C/r_{K_i} < c/r_k$, and $k>\mC_{K_i}$ ensures that $H_{NE,L,k},H_{NE,U,k}$ do not intersect $\Theta_{K_i}^{NE}$, and likewise for NW in place of NE. For any $j,\ell$, and any $A\in\mF_j$,  the event $A\cap\{K_i=j,\mC_j=\ell\}$ only conditions zigzag highways in $\mH_j$, and ensures that no SW/NE (or SE/NW) highways of class $\ge k = \max(j+n_0,\ell)+1$ intersect $\Theta_j^{NE}$ (or $\Theta_j^{NW}$, respectively); in particular it ensures that $\hX_{i,k}^*>C/r_j$ for $*=$ NE, NW.  Therefore since $C/r_j < c/r_k$ this event increases the probability of $I_k^1\cap M_k^1$: for such $j,\ell,k,A$,
\[
 \bP \big (I_k^1\cap M_k^1 \mid A\cap\{K_i=j,\mC_j=\ell\} \big ) \geq \bP( I_k^1\cap M_k^1).
\]
Similarly the bound in \eqref{Ckbound} is still valid conditionally: for such $j,\ell,k$,
\begin{equation}\label{Ckbound2}
  \bP \big (\mC_k-k \geq n \mid A\cap\{K_i=j,\mC_j=\ell\} \big ) \leq 32C\theta^n \quad\text{for all } n\geq 0.
\end{equation}
It follows that for some $c_2>0$,
\begin{equation}
\label{eq:Ki/i}
  \limsup_{i\to\infty} \frac{K_i}{i} \leq c_2\quad {\rm a.s.}
\end{equation}

On the other hand, it is straightforward that for some $\kappa_2>0$, for all $j<k$ and all $B\in\sigma(\tD_1,\dots,\tD_j)$,
\[
  \bP(\tD_k \mid B ) \geq \kappa_2\bP(\tD_k) \quad {\rm a.s.},
\]
so by independence of HV highways from zigzag ones, using \eqref{PFklower}, 
\begin{equation}\label{Fkcond}
  \bP \big (F_k^1 \mid A\cap B \cap \{K_i=j,\mC_j=\ell\} \big ) \geq \kappa_2\bP(F_k^1) \geq \kappa_2\kappa_0.
\end{equation}
Since $A,B$ are arbitrary, it follows that the variables $1_{F_{K_i}}$ dominate an independent Bernoulli sequence with parameter $\kappa_2\kappa_0$, so
\[
  \liminf_{m\to\infty} \frac 1m \sum_{i=1}^m 1_{F_{K_i}^1} \geq \kappa_2\kappa_0\quad {\rm a.s.}
\]
This, combined with \eqref{eq:Ki/i}, proves Claim \ref{claim1} with $\kappa_1=\kappa_2\kappa_0/c_2$.
\end{proof}
Note that by \eqref{Ckbound2}, the variables $K_{i+1}-K_i, i\geq 1$ are dominated by an i.i.d.~sequence of the form ``constant plus geometric random variable.''

\smallskip
Let us go back to the proof of Lemma \ref{lem:success}.  Let $B_k$ denote the event that none of the 4 highways $H_{*,\cdot,k}(\omega^{\rm zig,thin,1})$ are deleted in stage-2 deletions, then we have
\begin{equation}\label{nodelete}
  I_k^1 \cap M_k^1 \cap B_k \subset I_k\cap M_k.
\end{equation}

\begin{clm}\label{claimio}
We have
\[
  \sum_k \bP(B_k^c\cap I_k^1) < \infty\ ; \quad \sum_k \bP(E_{2,k}^c\cap \tD_k) < \infty \quad \text{ and } \quad   \sum_k \bP(E_{1,k}^c) < \infty.
\]
\end{clm}

\begin{proof}[Proof of Claim \ref{claimio}]
Let $Z_{*,\cdot,k}^H$ denote the class of the zigzag highway $H_{*,\cdot,k}(\omega^{\rm zig,thin,1})$, for $*=$ NE, NW and $\cdot=$ U, L, and let $Z_{*,k}^J$ denote the class of the HV highway $J_{*,k}$ for $*=$ N, E, W, S. 
Then, by \eqref{stage2prob}, for $n\ge 0$
\begin{equation}\label{eq}
  \bP \big (B_k^c \mid I_k^1\cap \{Z_{NE,L,k}^H = k+n\} \big ) \leq \frac{8}{1-\tgt}\ 2^{-\zeta(k+n)} \le \frac{8}{1-\tgt}\ 2^{-\zeta k}\, .
\end{equation}
Since the upper bound is independent of $n$, summing over $n$ we get that $ \bP(B_k^c \cap I_k^1)  \le 8 (1-\tgt)^{-1} 2^{-\zeta k}$, and the first item of Claim \ref{claimio} is proven.

Similarly,
\[
  \bP \big (E_{2,k}^c \mid \tD_k\cap \{Z_{N,k}^J = q_k+n\} \big ) \leq 2^{q_k+n+3}4^{-(q_k+n)} \leq 8\cdot 2^{-q_k}.
\]
so we get that $\bP\big( E_{2,k}^c \cap \tD_k \big) \le 2^{- q_k +3}$, and the second item of Claim \ref{claimio} is proven.

\smallskip
For the last item, recall $c_{\tgt}/(1+\zeta) = c_{\tgt}-\delta$. We have from the upper bound for $\mu$ in \eqref{params2}
\begin{align}\label{Rkmsum}
  \sum_{m>c_{\tgt}q_k/(1+\zeta)} \theta^{m-k-c_\theta \delta m}\eta^m 
    &\leq c_1 (\theta^{1-c_\theta \delta}\eta)^{(c_\theta (c_{\tgt}-\delta)-1)k}\mu^k \theta^{-c_\theta \delta k} (\mu^{-1}\eta)^k \notag\\
  &\leq c_1 (\eta^3 \theta^{-2})^{c_\theta \delta k} (\mu^{-1}\eta)^k,
\end{align}
so 
\[  \bP(E_{1,k}^c) \leq 4\sum_{m>c_{\tgt}q_k/(1+\zeta)} \bP\left( R_{k,m}^N > \theta^{m-k-c_\theta \delta m} \right) \, .\]
Analogously to \eqref{Ckbound} we have that
$\bE(R_{k,m}^N) \leq c_3\theta^m /r_k$
and by Markov's inequality,
\[
  \bP(E_{1,k}^c)
    \leq c_4 \sum_{m>c_{\tgt}q_k/(1+\zeta)} \theta^{c_\theta \delta m} \leq c_5\theta^{c_6k}.
\]
This proves the last item of Claim \ref{claimio}.
\end{proof}

Equation \eqref{Fkdens} in Lemma \ref{lem:success} now follows from \eqref{nodelete}, Claims \ref{claim1} and \ref{claimio}, and the Borel-Cantelli lemma. Equation \eqref{nj} comes additionally from the remark made at the end of the proof of Claim~\ref{claim1}.
\end{proof}

\subsection{Properties of geodesics in case of a success}
\label{sec:properties}

For $x\notin \gO_k$, we let $ \Gamma_{0x}$ be the geodesic from $0$ to $x$ (unique since the $\xi_e$ are continuous random variables.) For $p,q\in  \Gamma_{0x}$ we denote by $ \Gamma_{0x}[p,q]$ the segment of $\gG_{0x}$ from $p$ to $q$. We let $p_x$ be the first point of $ \Gamma_{0x}$ outside $\gO_k$.  We then define $t_x$ in the boundary of the ``near-rectangle'' $\gO_k^{NE}\cap\gO_k^{NW}$ as below.  

Note that this boundary consists of 4 zigzag segments, one from each highway $H_{*,\cdot,k}$ ($*=$ NE, NW; $\cdot=$ U, L).  Some boundary points (one bond or site at each ``corner'') are contained in 2 such segments; we call these \emph{double points}.  Removing all double points leaves 4 connected components of the boundary, which we call \emph{disjoint sides} of $\gO_k^{NE}\cap\gO_k^{NW}$, each contained in a unique highway $H_{*,\cdot,k}$. The set $\gO_k \backslash (\gO_k^{NE}\cap\gO_k^{NW})$ has 4 connected components, which we call \emph{arms}, extending from $\gO_k^{NE}\cap\gO_k^{NW}$ in the directions NW, NE, SE, SW.  Each arm includes one disjoint side of $\gO_k^{NE}\cap\gO_k^{NW}$  (recalling that the sets $\gO_k^*$ are open.) 

 If $p_x$ is not a double point then it is contained in the boundary of one of the arms, and we let $t_x$ be the first point of $\gG_{0x}[0,p_x]$ in that arm (necessarily in a disjoint side; see Case 3 of Figure \ref{fig:3cases}.) If instead $p_x$ is a double point, then we pick arbitrarily one of the two highways $H_{*,\cdot,k}$ containing it, and let $t_x$ be the first site of $\gG_{0x}[0,p_x]$ in that highway.

For $\mu$ from \eqref{params2}, let $L_{N,k}$ denote the horizontal line $\RR\times\{\mu^{-k}\}$  which, at least on the event $F_k$, lies above $J_{N,k}$ and below $G_k^1$ (since $1/2 < \mu < \tilde \theta^{c_{\theta}}$); $L_{*,k}$ is defined analogously for $*=$ E, S, W. 
We define $\Lambda_{H,k}$ to be the ``horizontal axis corridor,'' meaning the closure of the portion of $\gO_k^{NE}\cap\gO_k^{NW}$ strictly between $L_{S,k}$ and $L_{N,k}$, and let $\Lambda_{V,k}$ denote the similar ``vertical axis corridor.''  The primary part of establishing directedness in axis directions is showing that all semi-infinite geodesics remain in these corridors until they exit out the far end, at least for many $k$, via the following deterministic result.

\begin{lemma}
\label{lem:geodesics}
For sufficiently large $k$, when $ F_k$ occurs, for all $x\notin  \Omega_k$ we have either $\gG_{0x}[0, t_x]\subset {\Lambda}_{H,k}$ or $\gG_{0x}[0, t_x]\subset {\Lambda}_{V,k}$. 
\end{lemma}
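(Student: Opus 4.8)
The plan is to mimic the three-step strategy of the Proposition in Section~\ref{sec:simple}, but now tracking the effect of the compensated core passage times $\alpha_e^*$ and the small slowdowns $\sigma_e\le 0.2$ rather than the clean arithmetic of the degenerate model. Fix $k$ large and assume $F_k$ occurs, so in particular $\gO_k\subset\Theta_k$. Take $x\notin\gO_k$ and let $\gG_{0x}$ be the (unique) geodesic, $p_x$ its first exit point from $\gO_k$, and $t_x$ the point in a disjoint side of $\gO_k^{NE}\cap\gO_k^{NW}$ as defined just above. By symmetry (the construction and $F_k$ are invariant under the lattice symmetries) it suffices to treat the case where $t_x$ lies on $H_{NE,L,k}$, say on or near $A_1$ in the first quadrant below the main diagonal, and to show $\gG_{0x}[0,t_x]\subset\Lambda_{H,k}$. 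The contrary scenario is that $\gG_{0x}[0,t_x]$ at some point rises above the corridor boundary $L_{N,k}=\RR\times\{\mu^{-k}\}$ (or symmetrically drops below $L_{S,k}$), and the goal is to derive a contradiction with $\gG_{0x}[0,t_x]$ being a geodesic, by exhibiting a strictly faster competitor path $\psi_x$ from $0$ to $t_x$.

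\textbf{Construction of the competitor.} As in Section~\ref{sec:simple}, $\psi_x$ should run along the horizontal axis corridor: follow an HV highway (the relevant $J_{*,k}$, or the axis itself) eastward from near $0$ to the foot of $H_{NE,L,k}$ at $U_k^{NE}=(X_{1,k}^{NE},0)$, then follow the zigzag highway $H_{NE,L,k}$ up to $t_x$. The length of the horizontal leg is $O(1/r_k)$ by $I_k$, and on it $\alpha_e^*\le 0.9$ per bond (HV-only bonds, up to the $O(1)$ end corrections of \eqref{alphasum}) while the $\sigma_e$ contribution is at most $0.1\cdot 2\tr_{q_k}\cdot(\text{length})$; the event $\tD_k$ and the fact that $J_{*,k}$ has class $\ge q_k=c_\theta k+b$ make this total HV slowdown negligible compared with the zigzag gains we will collect. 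The zigzag leg of $\psi_x$ lies in $H_{NE,L,k}$, which has class $\ge k$, so every bond there has $\alpha_e^*$ at most $0.7$ (with the $0.5$ bonuses at intersections and $O(1)$ end effects) and slowdown roughly $0.1\eta^k$ per bond.

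\textbf{Comparing the passage times.} One compares $T(\gG_{0x}[0,t_x])$ with $T(\psi_x)$ through the decomposition $\tau_e=\alpha_e^*+\sigma_e$. For the $\alpha_e^*$-part, I would use \eqref{alphasum}, the lower bound $\sum_{e\in\Gamma}\alpha_e^*\ge 0.7|\Gamma|-0.2$, together with the geometric fact (as in Step~2 of the simple-example proof) that a path which wanders above $L_{N,k}$ before returning to $H_{NE,L,k}$ is genuinely longer, in the sense that its ``vertical excursion'' forces it to have substantially more bonds, or substantially fewer fast zigzag bonds, than $\psi_x$. The key quantitative input is that on $F_k$, via $M_k$, any zigzag highway other than $H_{NE,*,k}$ meeting $\Theta_k^{NE}$ has class at most $k-1$, so the would-be shortcut cannot find zigzag bonds faster than those in $H_{NE,L,k}$: its zigzag bonds cost at least $0.7+0.1\eta^{k-1}$ each, against $0.7+0.1\eta^k$ on $\psi_x$ — exactly the $\eta^{k-1}$ vs $\eta^k$ gap exploited in case (ii) of Step~2 of Section~\ref{sec:simple}, now amplified by $2^{k}$-scale lengths through \eqref{assumps2}, which was chosen precisely to absorb the extra additive $0.2$ from \eqref{alphasum}. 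For the $\sigma_e$-part on $\gG_{0x}[0,t_x]$, I would bound the contribution of backroad/boundary bonds by $0.1$ per bond (harmless, since those bonds already cost $\alpha_e^*\ge 1$), and the contribution of any high-class HV highways the geodesic might use by invoking $E_{1,k}$ and $E_{2,k}$: $E_{2,k}$ rules out slow bonds on $J_{*,k}$ and $H_{*,\cdot,k}$, and $E_{1,k}$ controls $\sum R_{k,m}^*\eta^m$, i.e.\ bounds the total ``speed-up'' the geodesic could gain by using class-$m$ zigzag highways crossing the $J_{*,k}$'s inside $\Theta_k$ — these are the only remaining fast highways available after stage-2 deletions. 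Combining, $T(\gG_{0x}[0,t_x])-T(\psi_x)\ge (\text{const})\cdot 2^{k}(\eta^{k-1}-\eta^k)-\tfrac{4C}{r_k}-O(1)>0$ by \eqref{assumps2}, contradicting geodesity.

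\textbf{Main obstacle.} I expect the serious difficulty to be the bookkeeping of the $\alpha_e^*$-compensation along the geodesic $\gG_{0x}[0,t_x]$ itself — unlike $\psi_x$, which is engineered to be ``clean,'' the geodesic may enter and exit many zigzag and HV highways, cross intersections, and incur all the semislow/meeting/terminal corrections of \eqref{compcore} repeatedly. The point of the ``compensation'' design is precisely that \eqref{alphasum} holds edge-set-wise with only an $O(1)$ total error regardless of these crossings, so that the number of highway crossings is \emph{not} a primary determinant of passage time; the hard part is verifying that this $O(1)$ error, plus the $O(1/r_k)$ from the HV-slowdowns and the $E_{1,k}$-bounded zigzag speed-ups, is genuinely dominated by the $2^k$-scale savings, uniformly over all $x\notin\gO_k$ and over the position at which the geodesic is forced to leave the corridor. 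A secondary subtlety is the geometric/topological argument (analogous to the case analysis (i)–(iii) of Step~2 in Section~\ref{sec:simple}, but now with $p_x$ possibly a double point and with four arms and two corridors) that a geodesic straying out of $\Lambda_{H,k}$ before $t_x$ really must pay the claimed geometric penalty — here the planarity of the lattice and the fact that the disjoint sides lie in distinct highways $H_{*,\cdot,k}$ should be used to rule out the geodesic sneaking along the boundary of $\gO_k^{NE}\cap\gO_k^{NW}$.
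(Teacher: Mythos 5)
Your high-level strategy is sound and matches the paper's in spirit, but the proposal has a concrete error in the competitor path, conflates two logically distinct parts of the argument, and leaves unresolved precisely the step that carries the proof.

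\textbf{The competitor is misdesigned.} You route $\psi_x$ ``along the horizontal axis corridor: follow an HV highway (the relevant $J_{*,k}$, or the axis itself) eastward from near $0$ to the foot of $H_{NE,L,k}$ at $U_k^{NE}=(X_{1,k}^{NE},0)$.'' Two problems. First, the axis itself is \emph{not} a highway in this model: its bonds are generically backroad bonds with $\alpha^*_e=1$, not HV-only bonds with $\alpha^*_e=0.9$, so a pure axis path is \emph{slower}, not faster, than a straight count of bonds would suggest, and the comparison collapses. Second, and more fundamentally, the right comparison is not between the whole $\gG_{0x}[0,t_x]$ and a path from $0$: the geodesic is allowed to wander freely inside the corridor $\Lambda_{H,k}$, which is much fatter than $J_{N,k}$ (height $\mu^{-k}$ vs.\ $\approx Y_{N,k}$), so comparing from the origin gives up too much. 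The paper instead isolates a \emph{segment} $\gG_{0x}[a_x,b_x]$ between the last crossing $a_x$ of $J_{N,k}$ before the geodesic reaches $L_{N,k}$ and a suitable endpoint $b_x$ (depending on whether the geodesic ``returns'' or exits through an accessible site), and compares this segment against the canonical path $\gamma_x$ that hugs $J_{N,k}$ and then $H_{NE,L,k}$. Without this localization, the argument does not go through.

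\textbf{You conflate the two separate estimates in the proof.} Your concluding inequality $T(\gG_{0x})-T(\psi_x)\gtrsim 2^k(\eta^{k-1}-\eta^k)-4C/r_k$ is the estimate for the preliminary Claim that $p_x$ is not on $G_k^i$ (the analogue of Step~2 from Section~\ref{sec:simple}, where the rectangle boundary is at diagonal distance $\approx 2^k$). That claim alone does \emph{not} pin the geodesic into the narrow corridor $\Lambda_{H,k}$, which is at height only $\mu^{-k}\ll 2^k$. To show the geodesic cannot rise above $L_{N,k}$, the paper obtains a lower bound with excess of order $(\mu^{-1}\eta)^k$ (from the forced overshoot $g\gtrsim \mu^{-k}$) against an upper bound for $\gamma_x$ whose dominant error term is $c_1(\eta^3\theta^{-2})^{c_\theta\delta k}(\mu^{-1}\eta)^k$; the inequality then holds because $\eta^3\theta^{-2}<1$. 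This is a genuinely different calculation, driven by the parameter choices in \eqref{params}--\eqref{params2}, and you do not reach it.

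\textbf{The core mechanism is flagged but not executed.} You correctly identify that the hard part is the $\alpha^*_e$-bookkeeping along the geodesic (``the main obstacle''), but the paper's resolution is a specific and nontrivial apparatus: partition the geodesic segment into zigzag segments $\beta_j$ and intermediate segments; introduce $N_{Hi}$, the count of zigzag highways intersected non-redundantly; enumerate the allowable bond-type patterns (the lists (i)--(xi)) to derive the per-segment inequality $T_{\alpha^*}(\beta_j)\ge 0.9N_B+0.7N_Z+0.2N_{Hi}$; and then convert the resulting lower bound into a constrained integer optimization \eqref{obj}--\eqref{constraint5} over eight direction-counting variables, solved by mass-shifting and a two-case analysis (2A/2B). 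This is where the $0.2N_{Hi}$ term and the $D(\Gamma)$ functional from \eqref{NHi} produce the crucial overshoot penalty; it is not a bounded-error patch on \eqref{alphasum}, and calling it ``an $O(1)$ error'' underestimates what needs to be proved. Likewise the ``secondary subtlety'' you mention --- the geometric trichotomy (returning / $p_x$ accessible / $t_x$ accessible) --- is load-bearing: it is what licenses the choice of $a_x,b_x$ and the use of the canonical path in each case. As it stands, the proposal sets the stage correctly but does not supply the argument that proves the lemma.
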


\begin{proof}
Let us start with a claim analogous to what we proved in Section \ref{sec:simple}. Recall the definitions of $\Gamma_{0x}$ and~$p_x$.
\begin{clm}
\label{claim3}
When $F_k$ occurs, for all $x\notin \gO_k$ we have $p_x \notin G_k^1\cup G_k^2\cup G_k^3\cup G_k^4$.
\end{clm}
\begin{proof}[Proof of Claim \ref{claim3}]
Suppose $p_x=(r,s)$ is in the horizontal part of $G_k^1$ (so $r\geq s=2^k$) and let ${U}_k'$ be the upper endpoint of the bond which is the intersection of $H_{NE,L,k}$ and the vertical line through $p_x$.  Define
the alternate path $\pi_x$ from 0 east to $U_k^{NE}$, then NE along $H_{NE,L,k}$ to $U_k'$, then north to $p_x$.  We now compare the passage times of $\gG_{0x}[0,p_x]$ versus $\pi_x$.

\smallskip
We divide the bonds of the lattice into NW/SE \emph{diagonal rows}: the $j$th diagonal row $R_j$ consists of those bonds with one endpoint in $\{(x_1,x_2):x_1+x_2=j-1\}$ and the other in $\{(x_1,x_2):x_1+x_2=j\}$.  
We call a bond $e\in\gG_{0x}[0,p_x]$ a \emph{first bond} if for some $j\geq 1$, $e$ is the first bond of $\gG_{0x}[0,p_x]$ in $R_j$, and we let $N$ be the number of first bonds in $\gG_{0x}[0,p_x]$ which are in SE/NW highways.  There are at least $2s$ first bonds, and any first bond $e$ not in any SE/NW highway satisfies $\tau_e \geq 0.7+0.1\eta^{k-1}$, since from $F_k\subset M_k$ we have $k(e)\leq k-1$.  From \eqref{six}, letting $q=| \gG_{0x}[0,p_x]|\geq 2s$, at most $q/24$ SE/NW highways intersect $\gG_{0x}[0,p_x]$. Since no two first bonds can be in the same SE/NW highway,  we thus have $N\leq q/24$.  Therefore using \eqref{alphasum},
\begin{align}\label{slower}
  T( \gG_{0x}[0,p_x]) &\geq (2s-N)(0.7+0.1\eta^{k-1}) + (q-2s+N)0.7 - 0.2 \notag\\
  &= 2s(0.7+0.1\eta^k) + 2s(0.1\eta^{k-1}-0.1\eta^k) + (q-2s)0.7 - 0.1N\eta^{k-1} - 0.2 \notag\\
  &\geq 2s(0.7+0.1\eta^k) + 2s(0.1\eta^{k-1}-0.1\eta^k) + (q-2s)0.7 - \frac{q}{240}\eta^{k-1} - 0.2.
\end{align}
If $q\geq 3s$ then $q(0.7-\eta^{k-1}/240) \geq 1.4s$ so from \eqref{slower} we get 
\begin{equation}\label{slower2}
  T( \gG_{0x}[0,p_x]) \geq 2s(0.7+0.1\eta^k) + 2s(0.1\eta^{k-1}-0.1\eta^k) - 0.2.
\end{equation}
If instead $q<3s$ then since $1-\eta\geq 1/8$ we have $s(0.1\eta^{k-1}-0.1\eta^k) \geq s\eta^{k-1}/80 \geq q\eta^{k-1}/240$, so
\begin{equation}\label{slower3}
  T( \gG_{0x}[0,p_x]) \geq 2s(0.7+0.1\eta^k) + s(0.1\eta^{k-1}-0.1\eta^k) -  0.2.
\end{equation}
By contrast, the NE segment of $\pi_x$ has length $2(r-X_{1,k}^{NE})$, the N segment has length $X_{1,k}^{NE}-(r-s)$, and all bonds have passage times at most 1.4, so we have for large $k$
\begin{align*}
  T(\pi_x) &\leq 1.4 X_{1,k}^{NE} + 1.4(X_{1,k}^{NE} - (r-s)) + 2(0.7+0.1\eta^k + 0.1\tgh^k)(r-X_{1,k}^{NE}) \\
  &\le 2.8  X_{1,k}^{NE} + 2s(0.7+0.1\eta^k +0.1 \tgh^k) \\
  &\leq 2s(0.7+0.1\eta^k) + \frac{4C}{r_k},
\end{align*} 
where we used that $X_{1,k}^{NE} \le C/r_k$, and that $ \tgh^k s = (2\tgh)^k \le C/r_k$, using \eqref{params}.
Therefore by \eqref{etatheta} we get
\[
  T(\gG_{0x}[0,p_x]) - T(\pi_x) \geq 0.1\cdot 2^k(\eta^{k-1}-\eta^k) - \frac{4C}{r_k} - 0.2 > 0.
\] 
This contradicts $\gG_{0x}[0,p_x]$ being a geodesic, so $p_x$ cannot be in the horizontal part of $G_k^1$. All other cases are symmetric, so Claim \ref{claim3} is proved.
\end{proof}

We need to further restrict the location of $ \gG_{0x}[0,p_x]$.  When $F_k$ occurs, we begin by dividing the sites of each $H_{*,\cdot,k}$ (with $*=$ NW, NE and $\cdot=$ L, U) into \emph{accessible} and \emph{inaccessible} sites
In $H_{NE,L,k}$ we define as \emph{inaccessible} the sites strictly between its intersection with $J_{N,k}$ and its intersection with $J_{W,k}$; the rest of the sites are \emph{accessible}.  Lattice symmetry yields the definition of accessible in the other 3 zigzag paths.

This definition enables us to define canonical paths to reach accessible points, which we need below.
Given a site $a\in J_{N,k}\cap\gO_k^{NE}$ and an accessible site $b\in H_{NE,L,k}$ in the first quadrant, there is a \emph{canonical path} from $a$ to $b$ which follows $J_{N,k}$ from $a$ to $H_{NE,L,k}$, then (changing direction 45 degrees) follows $H_{NE,L,k}$ to $b$.  Similarly we can define canonical paths from sites $a\in J_{S,k}\cap\gO_k^{NW}$ to accessible $b\in H_{NW,U,k}$ in the 4th quadrant, with further extension by lattice symmetries.
When two points $a,b$ lie in the same horizontal or vertical line, we define the \emph{canonical path} from $a$ to $b$ to be the one which follows that line.

We say $\gG_{0x}[0,p_x]$ is \emph{returning} if it contains a point of $L_{*,k}$, followed by a point of $J_{*,k}$, where both values $*$  (N, E, W or S) are the same. We recall that by definition, from \eqref{params2} we have $\theta<\mu<\tgt^{c_\theta }$ which ensures that, when $F_k$ occurs, the height of $\Lambda_{H,k}$ is much less than its length, but much more than the height of $J_{N,k}$.
Recall the definitions of $p_x,t_x$ from the beginning of the section.


\begin{clm}\label{claimcases}
If $F_k$ occurs and $\Gamma_{0x}[0,t_x]\not\subset {\Lambda}_{H,k} \cup {\Lambda}_{V,k}$, then either $\gG_{0x}[0,p_x]$ is returning, or at least one of $t_x,p_x$ is accessible.
\end{clm}
\begin{proof}
Suppose $F_k$ occurs, $ \Gamma_{0x}[0,t_x]\not\subset {\Lambda}_{H,k} \cup {\Lambda}_{V,k}$, and neither $t_x$ nor $p_x$ is accessible. We may assume $p_x$ lies in $H_{NE,L,k}$ on or above $H_{NW,U,k}$, as other cases are symmetric; then $t_x\in H_{NW,U,k}$ (or we may assume so, if $p_x$ is a double point.)  Since $ \Gamma_{0x}[0,t_x]\not\subset {\Lambda}_{H,k} \cup {\Lambda}_{V,k}$, $ \Gamma_{0x}[0,t_x]$ must intersect $L_{N,k}\cup L_{S,k}$; 
let $q_x$ be the first such point of intersection.  
If $q_x\in L_{S,k}$, then the fact that $t_x$ is not accessible (it must lie above $J_{S,k}$) means  that $\gG_{0x}[0,p_x]$ is returning. If $q_x\in L_{N,k}$, then the fact that $p_x$ is not accessible (hence lying below $J_{N,k}$)  again means that $\gG_{0x}[0,p_x]$ is returning. This proves Claim \ref{claimcases}.
\end{proof}

We now complete the proof of Lemma \ref{lem:geodesics}, by a contradiction argument. Assume that $F_k$ occurs, but for some $x\notin \gO_k$ we have $ \Gamma_{0x}[0,t_x] \not\subset {\Lambda}_{H,k} \cup {\Lambda}_{V,k}$.  As in the proof of Claim~\ref{claimcases}, we may assume $p_x$ lies in $H_{NE,L,k}$ on or above $H_{NW,U,k}$, and then that $t_x\in H_{NW,U,k}$.  Let $q_x\in L_{N,k}\cup L_{S,k}$ be as in the proof of Claim~\ref{claimcases}, and let $a_x$ be the last point of $ \Gamma_{0x}[0,q_x]$ in~$J_{*,k}$, with subscript $*=$ N or S according as $q_x\in L_{N,k}$ or $q_x\in L_{S,k}$. Thus $ \gG_{0x}[0,p_x]$ follows a path $0\to a_x\to q_x\to t_x\to p_x$. 

By Claim \ref{claimcases}, we now have three cases:

\begin{description}
\item[{Case 1.}] $ \gG_{0x}[0,p_x]$ is returning.  In this case we define $b_x$ to be the first point of $ \Gamma_{0x}[q_x,p_x]$ in the line $J_{*,k}$ ($*=$ N or S) containing $a_x$.
\item[{Case 2.}] $ \gG_{0x}[0,p_x]$ is not returning, and $p_x$ is accessible (hence above $J_{N,k}$, so necessarily $ q_x \in L_{N,k}$).  In this case we define $b_x=p_x$.  
\item[{Case 3.} ]$ \gG_{0x}[0,p_x]$ is not returning, $p_x$ is inaccessible (hence below $J_{N,k}$, so necessarily $ q_x \in L_{S,k}$), and $t_x$ is accessible.  
In this case we define $b_x=t_x$.
\end{description}

In all three cases we compare passage times for $ \gG_{0x}[a_x,b_x]$ to that of the canonical path from $a_x$ to $b_x$, which we denote $\gamma_x$, and we obtain our contradiction by showing that 
\begin{equation}\label{faster}
  T(\gamma_x) < T( \gG_{0x}[a_x,b_x]),
\end{equation}
see Figure \ref{fig:3cases}.

\begin{figure}[htbp]
\vspace{-.1in}
\begin{center}
\includegraphics[scale=.9]{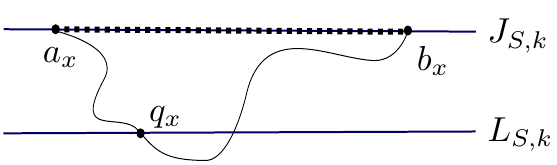}\quad
\includegraphics[scale=.9]{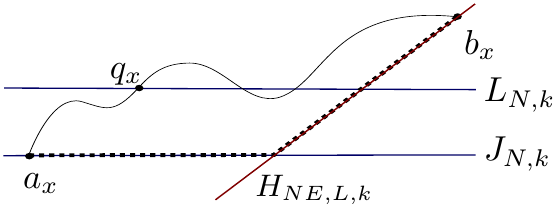}\quad
\includegraphics[scale=.85]{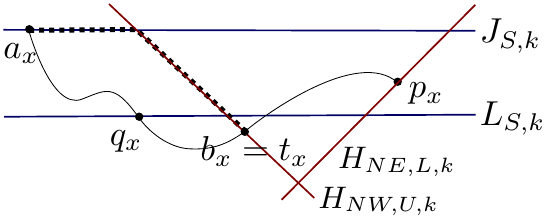}
\end{center}
\caption{\footnotesize 
Cases 1-2-3 (from left to right). The path $ \Gamma_{0x}$ is the curved line going through $ a_x, q_x$ and $ b_x$, and the canonical path $\gamma_x$ is the dashed line joining $ a_x$ to $ b_x$ through highways.}
\label{fig:3cases}
\end{figure}

Since $E_{2,k}$ occurs, there are no slow bonds in $\gamma_x$, so to prove \eqref{faster} we may and do assume there are no slow bonds at all.
Cases~2 and~3 are essentially symmetric, so we focus on  Case~2 first, then the simpler Case~1.

We write (in Case 2)
\begin{equation}\label{coords}
 a_x = (u,Y_{N,k}), \quad q_x = (y,\mu^{-k}), \quad 
   b_x = p_x  = (r,s),
\end{equation}
and we let $(d,Y_{N,k})$ denote the left endpoint of the bond $J_{N,k}\cap H_{NE,L,k}$.

The class of the highway $J_{N,k}$ is at least $q_k$, so due to stage-2 deletions, the only zigzag highways intersecting $J_{N,k}$ have class greater than $c_{\tgt}q_k/(1+\zeta)$.  
From \eqref{alphasum}, \eqref{tenthbound} and the definition of $E_{1,k}$ we have
\begin{align}\label{gammabound3}
  T(\gamma_x) &\leq 0.9(d-u) + 0.4 + \sum_{m>c_{\tgt}q_k/(1+\zeta)} R_{k,m}\eta^m + 2(s-Y_{N,k})(0.7+0.1\eta^k) + 0.2 \notag\\
  &\leq 0.9(d-u) + 2(s-Y_{N,k})(0.7+0.1\eta^k) + 0.6 + c_1(\eta^3\theta^{-2})^{c_\theta \delta k} (\mu^{-1}\eta)^k.
\end{align}
Our aim is to show that for some $c_7$,
\begin{equation}\label{Gammaslow}
  T( \gG_{0x}[a_x,b_x]) \geq 0.9(d-u) + 2(s-Y_{N,k})(0.7+0.1\eta^k)  + c_7 (\mu^{-1}\eta)^k,
  \end{equation}
which with \eqref{gammabound3} is sufficient to yield \eqref{faster} for all large $k$, since $\mu^{-1}\eta>1$ by \eqref{params2} and $\eta^3\theta^{-2}<1$ by \eqref{params}.

\smallskip
The rest of the proof is devoted to showing \eqref{Gammaslow}, by analyzing the type (and number of each type) of bonds that the path $\Gamma_{0x}$ uses.
 
We observe first that the intersection of a geodesic with any one zigzag highway is always connected, since we have assumed there are no slow bonds.
A \emph{singleton bond} in a path $\Gamma$ is a zigzag bond in $\Gamma$ which is preceded and followed in $\Gamma$ by boundary bonds.  
We divide $ \gG_{0x}[a_x,b_x] = (a_x=z_0,z_1,\dots,z_n=b_x)$ into the following types of segments: 
\begin{itemize}
\item[(i)] \emph{zigzag segments}: maximal subsegments which do not contain two consecutive non-zigzag bonds.  A zigzag segment must start and end with a boundary bond, unless it starts at $a_x$ or ends at $b_x$.
\item[(ii)] \emph{intermediate segments}: the segments in between two consecutive zigzag segments, the segment up to the first zigzag segment, and the segment after the final zigzag segment (any of which may be empty.)
\end{itemize}
Within zigzag segments we find
\begin{itemize}
\item[(iii)] \emph{component segments}: maximal subsegments contained in a single zigzag highway.  
\end{itemize} 
For each component segment in $ \gG_{0x}[a_x,b_x]$ that is not both an intersection bond and a singleton bond, there is 
a unique highway orientation (SW/NE or SE/NW) determined by the zigzag highway containing the segment.  We define $\Phi(e)$ to be this orientation, for each bond $e$ in the segment.  For singleton bonds in $\Gamma$ that are also intersection bonds, we assign $\Phi(e)$ arbitrarily. We say that a zigzag highway $H$ intersects $ \gG_{0x}[a_x,b_x]$ \emph{redundantly} if the intersection is a single bond $e$ and the orientation of $H$ is not $\Phi(e)$.

\smallskip
We let $z_{n_{2j-2}},z_{n_{2j-1}}$ be the endpoints of the $j$th intermediate segment, $1\leq j\leq J+1$, so $\beta_j=  \gG_{0x}[z_{n_{2j-1}},z_{n_{2j}}]$ is the $j$th zigzag segment, $1\leq j \leq J$.   
For any path $\Gamma$ we define
\begin{equation}
  T_{\alpha^*}(\Gamma) = \sum_{e\in\Gamma} \alpha_e^*. \quad 
  \end{equation}
For $*=$ Z, B, H, V we define 
\[
  N_*(\Gamma) = \big| \{e\in\Gamma: e \text{ has property } * \} \big|,
\] 
where subscripts and corresponding properties are as follows:
\begin{align*}
  Z: \text{ zigzag}& ,\qquad
  B: \text{ not zigzag},\qquad
  H: \text{ horizontal}, \qquad
  V: \text{ vertical},\\
  &N: \text{ northward step in $\Gamma$ (E, W, S similar)}.
\end{align*}
We may combine subscripts to require multiple properties, for example $N_{ZH}(\Gamma) = | \{e\in\Gamma: e $ is a horizontal zigzag bond$\}|$, and we use superscripts NE or NW to restrict the count to zigzag bonds with $\Phi(e) =$ SW/NE or SE/NW, respectively. We also let $N_{Hi}(\Gamma)$ be the number of zigzag highways intersecting $\Gamma$ non-redundantly.
Note that if for example a SW/NE highway segment is traversed by $\Gamma$ in the NE direction, the number of N and E steps differs by at most 1.  
It follows that for every geodesic $\Gamma$ we have
\begin{align}\label{NHi}
  N_{Hi}(\Gamma) \geq D(\Gamma) := |N_{ZE}^{NE}&(\Gamma) - N_{ZN}^{NE}(\Gamma)| 
    + |N_{ZW}^{NE}(\Gamma) - N_{ZS}^{NE}(\Gamma)| \notag\\
  &+ |N_{ZE}^{NW}(\Gamma) - N_{ZS}^{NW}(\Gamma)| + |N_{ZW}^{NW}(\Gamma) - N_{ZN}^{NW}(\Gamma)|.
\end{align}


We now make some observations about geodesics and zigzag segments. 
We show that a zigzag segment~$\beta_j$ may contain multiple bonds of at most one zigzag highway (which we call \emph{primary}, when it exists)---any zigzag bonds in $\beta_j$ not in the primary highway are necessarily singleton bonds, and there are at most 2 of these; if $\beta_j$ intersects 3 zigzag highways non-redundantly then the primary highway must lie between the two singletons. Moreover, any non-zigzag interior bond of some $\beta_j$ must be an entry/exit bond.  
Indeed, due to the stage-3 trimming,  in order for $\Gamma$ to switch from one zigzag highway to another within $\beta_j$ (with at least two bonds on each) there would need to be one of the following succession of steps (or some lattice rotation thereof): (i) N, E, E, E, S with the middle one being an exit/entry bond; (ii) N, E, E, S with the middle 2 being meeting zigzag bonds; (iii) N, E, S with the middle one being an intersection bond. We refer to Figure \ref{fig:crossing} for a picture. But  (since we are assuming no slow bonds) none of these patterns can occur in a geodesic, because omitting the N and S steps always produces a faster path.
Then \eqref{six} and the stage-3 trimming establish the remaining properties mentioned.

For $2\leq j\leq J-1$, if $\beta_j$ contains a primary highway then the bonds of $\beta_j$, from the initial bond through the first bond of the primary highway, must follow one of the following patterns (we refer to figure~\ref{fig:crossing}):
\begin{itemize}
\item[(i)] skimming boundary, terminal
\item[(ii)] boundary (not skimming), zigzag (not intersection)
\item[(iii)] semislow boundary, intersection, meeting
\item[(iv)] boundary (not skimming), meeting, meeting
\item[(v)]   boundary (not skimming), meeting, intersection (followed by meeting)
\item[(vi)] boundary (not skimming), zigzag (not intersection), entry/exit, zigzag (not intersection).
\end{itemize}
The same is true in reverse order at the opposite end of $\beta_j$. (Note that certain patterns cannot appear in a geodesic $\Gamma$, for example a semislow boundary bond with both endpoints in meeting bonds cannot be adjacent in $\Gamma$ to either meeting bond, so these are not listed here.) 
If there is no primary highway then the full $\beta_j$ follows one of the following patterns, or its reverse:
\begin{itemize}
\item[(vii)] boundary (not skimming), zigzag (not intersection), boundary (not skimming)
\item[(viii)] skimming boundary, singly terminal, boundary (not skimming)
 \item[(ix)] skimming boundary, doubly terminal, skimming boundary 
\item[(x)] semislow boundary, intersection, semislow boundary
\item[(xi)] normal boundary, zigzag (not intersection), entry/exit, zigzag (not intersection), normal boundary.
\end{itemize} 
It is readily checked from this that in all cases
\begin{align}\label{middlej}
  T_{\alpha^*}(\beta_j) &\geq 0.9N_B(\beta_j) + 0.7N_Z(\beta_j) + 0.2N_{Hi}(\beta_j), \quad 2\leq j\leq J-1.
\end{align}
For $j=1,J$, $\beta_j$ is a truncation of a path as in (i)--(xi), omitting a (possibly empty) segment of bonds at one end,
and we similarly have
\[
  T_{\alpha^*}(\beta_j) \geq 0.9N_B(\beta_j) + 0.7N_Z(\beta_j) + 0.2N_{Hi}(\beta_j) - 0.2, \quad j=1,J,
\]
and therefore
\begin{align}\label{TGlower}
  &T_{\alpha^*}( \gG_{0x}[a_x,b_x])  \geq 0.9N_B( \gG_{0x}[a_x,b_x]) 
     + 0.7N_Z( \gG_{0x}[a_x,b_x]) + 0.2N_{Hi}( \gG_{0x}[a_x,b_x]) - 0.4 .
\end{align}
By \eqref{TGlower} and \eqref{NHi} we have
\begin{align}\label{TGlower0}
   T_{\alpha^*}(\gG_{0x}[a_x,b_x]) \geq 0.9N_B(\gG_{0x}[a_x,b_x]) 
     + 0.7N_Z(\gG_{0x}[a_x,b_x]) + 0.2D(\gG_{0x}[a_x,b_x]) - 0.4,   
\end{align}
and from the definition of $M_k^{NE}$,
\begin{align}\label{TGeta}
   T(\gG_{0x}[a_x,b_x]) &\geq T_{\alpha^*}(\gG_{0x}[a_x,b_x]) 
     + 0.1\eta^{k-1} \big(   N_{ZN}^{NE}(\gG_{0x}[a_x,b_x]) + N_{ZE}^{NE}(\gG_{0x}[a_x,b_x])   \big).
\end{align}

In view of \eqref{coords} and \eqref{TGlower0}-\eqref{TGeta}, let us consider the question of minimizing
\begin{align}\label{obj}
  0.9&(n_{BE} + n_{BW} + n_{BN} + n_{BS}) \notag\\
  &\quad + 0.7\left(n_{ZN}^{NE} + n_{ZE}^{NE} + n_{ZW}^{NE} + n_{ZS}^{NE} 
    + n_{ZN}^{NW} + n_{ZE}^{NW} + n_{ZW}^{NW} + n_{ZS}^{NW} \right) \notag\\
  &\quad + 0.2\Big( |n_{ZE}^{NE} - n_{ZN}^{NE}| 
    + |n_{ZW}^{NE} - n_{ZS}^{NE}| + |n_{ZE}^{NW} - n_{ZS}^{NW}| + |n_{ZW}^{NW} - n_{ZN}^{NW}| \Big) \notag\\
  &\quad + 0.1\eta^{k-1}(n_{ZN}^{NE} + n_{ZE}^{NE}) - 0.4
\end{align}
subject to all 8 variables being nonnegative integers satisfying
\begin{align}
   \label{constraint1}
  n_{ZE}^{NE} + n_{ZE}^{NW} - n_{ZW}^{NE} - n_{ZW}^{NW} + n_{BE} - n_{BW} &= (d-u)+(s-Y_{N,k}) = N_H(\gamma_x), \\
  \label{constraint2}
  n_{ZN}^{NE} - n_{ZS}^{NW} - n_{ZS}^{NE} + n_{ZN}^{NW} + n_{BN} - n_{BS} &= s-Y_{N,k} = N_V(\gamma_x), \\
  \label{constraint3}
  n_{ZN}^{NE} + n_{ZN}^{NW} + n_{BN} &= (s-Y_{N,k}) + g, \\
  \label{constraint4}
  n_{ZS}^{NE} + n_{ZS}^{NW} + n_{BS} &= g, \\
  \label{constraint5}
  n_{ZN}^{NW} + n_{BN} &= j,
\end{align}
for some fixed $g\geq 0$ and $0\leq j\leq (s-Y_{N,k})+g$. Here \eqref{constraint4} is redundant but we include it for ready reference, and despite \eqref{constraint5} we formulate the problem with $n_{ZN}^{NW}$ as a variable, to match the rest of the problem.  Further, $g$ may be viewed as an ``overshoot'', the number of northward steps beyond the minimum needed to reach the height $s$ of $p_x$, and $j$ is the number of northward steps taken ``inefficiently,'' that is, not in NE/SW zigzag highways.  Since $q_x$ is at height $\mu^{-k}$, we may restrict to $s+g\geq \mu^{-k}$, and thus from the definition of $\tilde D_k$, also to
\begin{equation}\label{qxheight}
  s-Y_{N,k}+g\geq \mu^{-k} - \frac{C}{\tilde r_{q_k} } \geq \frac{ \mu^{-k} }{2},
\end{equation} 
the last inequality being valid for large $k$, following from the fact that $\tilde r_{q_k}$ is a constant multiple of $\tgt^{ -c_{\gt}k }$ while $\mu < \tgt^{ c_{\gt} }$ by \eqref{params2}. 
To study this we use the concept of \emph{shifting mass} from one variable $n_{\bullet}^*$ to a second one, by which we mean incrementing the second by 1 and the first by $-1$.  We also use \emph{canceling mass} between two variables in \eqref{constraint1} or two in \eqref{constraint2}, one appearing with $``+"$ and the other with $``-"$, by which we mean decreasing each variable by 1.  

Shifting mass from $n_{BS}$ to $n_{ZS}^{NE}$, or from $n_{BN}$ to $n_{ZN}^{NW}$, does not increase \eqref{obj}, so a minimum exists with $n_{BS}=n_{BN}=0$, so we may eliminate those two variables. 
 Among the variables $n_{\bullet}^*$ in \eqref{constraint1}, if a variable with ``$+$'' and a variable with ``$-$'' are both nonzero, then canceling (unit) mass between them decreases \eqref{obj} by at least 1; this means that a minimum exists with all negative terms on the left in \eqref{constraint1} equal to 0.  Then shifting mass in \eqref{constraint2} from $n_{ZS}^{NE}$ to $n_{ZS}^{NW}$, or in \eqref{constraint1} from $n_{BE}$ to $n_{ZE}^{NW}$, does not increase \eqref{obj} (since the preceding step has set $n_{ZW}^{NE}$ and $n_{ZW}^{NW}$ to $0$), so there is a minimum with also $n_{ZS}^{NE}=n_{BE}=0$.  With these variables set to 0, the problem becomes minimizing 
\begin{align}\label{objrev1}
  0.7&\left(n_{ZE}^{NE} + n_{ZE}^{NW} + s-Y_{N,k}+2g \right) + 0.2\Big( |n_{ZE}^{NE} - (s-Y_{N,k}+g-j)| + |n_{ZE}^{NW} - g| + j\Big) \notag\\
  &\qquad + 0.1\eta^{k-1}(s-Y_{N,k}+g-j + n_{ZE}^{NE}) - 0.4
\end{align}
subject to 
\begin{align}\label{newconstrE}
  n_{ZE}^{NE} + n_{ZE}^{NW} &= (d-u)+(s-Y_{N,k}).
\end{align}
Setting $n_{ZE}^{NE}=z, n_{ZE}^{NW} = (d-u)+(s-Y_{N,k})-z$ and considering the effect of incrementing $z$ by 1, we see that \eqref{objrev1} is minimized (not necessarily uniquely) at 
\begin{equation}\label{zmin}
  z= \min\Big((s-Y_{N,k})+g-j,[(s-Y_{N,k})+(d-u)-g]\vee 0\Big).
\end{equation}
We now consider two cases.

{\bf Case 2A.} $2g-j\leq d-u$.  Here we have $z=(s-Y_{N,k})+g-j\geq 0$ in \eqref{zmin}, and the corresponding minimum value of \eqref{objrev1} is 
\begin{align*}
  0.9&(d-u) + 0.7\cdot 2(s-Y_{N,k}) + g+0.4j + 0.1\eta^{k-1}(2(s-Y_{N,k})+2(g-j)) - 0.4\\
  &\geq 0.9(d-u) + 0.7\cdot 2(s-Y_{N,k}) +g+ 0.1\eta^{k-1} \cdot 2(s-Y_{N,k}) - 0.4\\
  &\geq 0.9(d-u) + (0.7 + 0.1 \eta^{k-1}) 2(s-Y_{N,k})  + 0.1\eta^k(\eta^{-1}-1) \cdot 2 (s-Y_{N,k}+g) - 0.4.
\end{align*}
For the first inequality, we used that $0.4 j\ge  0.2 \eta^{k-1} j$, and for the second one that $g \ge 0.2\eta^k(\eta^{-1}-1)g  $, for $k$ sufficiently large.
With \eqref{TGeta} and \eqref{qxheight} this shows that
\begin{align*}
  T&( \gG_{0x}[a_x,b_x]) \geq 0.9(d-u) + (0.7 + 0.1\eta^k)\cdot 2(s-Y_{N,k}) + 0.1 \eta^k(\eta^{-1}-1)\mu^{-k} - 0.4.
\end{align*}
Since $\eta>\mu$ by \eqref{params2}, this proves \eqref{Gammaslow}.

{\bf Case 2B.} $2g-j>d-u$, so that
\begin{equation}\label{glower}
  0.8g \geq 0.4(d-u) + 0.1\eta^{k-1} j.
\end{equation}
Here we have $z=[(s-Y_{N,k})+(d-u)-g]\vee 0$ in \eqref{zmin}, and the corresponding minimum value of \eqref{objrev1} in the case $z = (s-Y_{N,k})+(d-u) -g>0$ (the case $z=0$ being treated similarly) is 
\begin{align*}
  0.5&(d-u) + 0.7\cdot 2(s-Y_{N,k}) + 1.8g + 0.1\eta^{k-1}((d-u) + 2(s-Y_{N,k}) - j) - 0.4 \\
  &\geq 0.9(d-u) + 0.7\cdot 2(s-Y_{N,k}) + g+  0.1\eta^{k-1}\cdot 2(s-Y_{N,k})  - 0.4
\end{align*}
where the inequality follows from \eqref{glower}.
Then \eqref{Gammaslow} follows as in Case 2A.

\smallskip

We now briefly explain the modifications of the above argument to treat Case~1. Analogously to \eqref{coords}, we write
\[ a_x = (u,Y_{N,k}), \quad q_x = (y,\mu^{-k}), \quad 
   b_x = (v,Y_{N,k}),\]
and we may assume $v-u>0$, since otherwise we can consider the path $\gG_{0x}[a_x,b_x]$ running backwards. 
Similarly to \eqref{gammabound3}, we have
\begin{align}\label{gammabound1}
  T(\gamma_x) &\leq 0.9(v-u) + 0.5 + \sum_{m>c_{\tgt}q_k/(1+\zeta)} R_{k,m}\eta^m \notag\\
  &\leq 0.9(v-u) + 0.5 + c_1(\eta^3\theta^{-2})^{c_\theta \delta k} (\mu^{-1}\eta)^k.
\end{align}
and similarly to \eqref{Gammaslow} we want to show
\begin{equation}\label{Gammaslow2}
  T(\gG_{0x}[a_x,b_x]) \geq 0.9(v-u) + c_7 (\mu^{-1}\eta)^k
\end{equation}
in order to obtain \eqref{faster}.
Then, in the proof above, we replace $d-u$ with $v-u$ and $s-Y_{N,k}$ with 0 in the constraints \eqref{constraint1}--\eqref{constraint5} and in \eqref{qxheight}. Otherwise the proof remains the same, establishing \eqref{Gammaslow2}.
In the end, in all the Cases 1--3 we have the contradiction \eqref{faster}, and Lemma \ref{lem:geodesics} is proven.
\end{proof}

\subsection{Conclusion of the proof of Theorem \ref{main}}
\label{sec:i-iii}

Item (ii).
This follows from the combination of Lemma~\ref{lem:success} and Lemma~\ref{lem:geodesics}.
For a configuration $\omega$ let $n_1(\omega)<n_2(\omega)<\dots$ be the indices $k$ for which $\omega\in F_k$.  
Let $\Gamma_0$ be an infinite geodesic starting from the origin, with sites $0=x_0,x_1,\dots$.  
Then Lemma \ref{lem:geodesics} says that for each $j\geq 1$, $\Gamma_0$ is contained in either $\Lambda_{H,n_j}$ or $\Lambda_{V,n_j}$ until it leaves $\gO_{n_j}^{NE}\cap\gO_{n_j}^{NW}$; accordingly, we say $\Gamma_0$ is {\it horizontal at stage~j} or {\it vertical at stage~j}.

Suppose $\Gamma_0$ is horizontal at stage~$j$, and vertical at stage~$j+1$. The horizontal coordinate of the first point of $\Gamma_0$ outside $\gO_{n_j}^{NE}\cap\gO_{n_j}^{NW}$ then has magnitude at least $c/r_{n_j} - \mu^{-n_j}$ but at most $\mu^{-n_{j+1}}$  (since $\Gamma_0$ is vertical at stage $j+1$.)  Since $ \theta < \mu$ by \eqref{params2}, this means that for large $j$ we have $c/(2r_{n_j}) \le \mu^{-n_{j+1}}$.
Choosing $\gep>0$ small enough so that $\mu^{1+\gep}>\theta$, thanks to \eqref{nj} we get that for large $j$, $n_{j+1} \le (1+\gep)   n_j$, so that
\[
\Big(\frac{\mu^{1+\gep} }{\theta} \Big)^{n_j} \leq \frac{\mu^{n_{j+1}}}{r_{n_j}} \leq \frac{2}{c}.
\]
But this can only be true for finitely many $j$. Hence there exists a random $J_0$ such that for $j\geq J_0$, either $\Gamma_0$ is horizontal at stage $j$ for all $j\geq J_0$, or $\Gamma_0$ is vertical at stage $j$ for all $j\geq J_0$.   (We call $\Gamma_0$ {\it horizontal} or {\it vertical}, accordingly.) Using again that $n_{j+1}/n_j \to 1$, we get that
\[
\mu^{-n_{j+1}} \ll \frac{c}{ r_{n_j} } \quad\text{as } j\to\infty,
\]
that is, the width of $\Lambda_{*,n_{j+1}}$ is much less than the length of $\Lambda_{*,n_j}$, for $*=$ H, V. This guarantees that for such $\Gamma_0$, the angle to $x_i$ from an axis approaches 0, that is, $\Gamma_0$ is directed in an axis direction, proving Theorem~\ref{main}(ii).

\begin{remark}\rm 
The above reasoning gives that geodesics reaching horizontal distance $n= c/r_{k}= c' \theta^{-k}$ deviate from the horizontal axis by at most $\mu^{-k}$. 
In the other direction, heuristically, in order to reach horizontal distance $n = c/r_{k}$, the most efficient way should involve a route going as soon as possible (through a succession of horizontal and zigzag highways) to the closest horizontal highway that reaches at least distance $c/r_k$, and then following that highway. This suggests that in order to reach distance $n=c/r_k$, geodesics have a transversal fluctuation of order at least $1/\tilde r_{q_k} = c \tilde \theta^{- c_\theta  k}=c\theta^{-c_{\tgt}k}$, or equivalently order $n^{c_{\tgt}}$, as this is the typical vertical distance to the closest horizontal highway reaching $n$. 

Suppose $c_{\tgt}>10/11$.  Then choosing $c_\theta$ slightly less than 0.5, then $\eta$ slightly less than 2/3, and then $\delta$ sufficiently small, we satisfy \eqref{params}, \eqref{params2}, and the conditions preceding them, and further, $\tgt^{c_\theta}$ is the smallest of the 3 quantities on the right side of \eqref{params2}.  This means we can choose $\mu$ arbitrarily close to $\tgt^{c_\theta}=\theta^{c_{\tgt}}$.  Thus our upper bound of $\mu^{-k}$ becomes $n^{c_{\tgt} +o(1)}$, nearly matching the heuristic lower bound.  
Hence in this case we expect geodesics reaching horizontal distance $n$ to have transversal fluctuations of order $n^{c_{\tgt} +o(1)}$, with at least all values $c_{\tgt}\in (10/11,1)$ being possible.
\end{remark}

\smallskip
Item (i). 
It follows readily from an upper bound in the same style as the lower bound \eqref{IkMk} that for $\epsilon>0$, $\bP(X_{i,k}^{\ast} \geq \epsilon 2^k)$ is summable over $k$, so that $X_{i,k}^{\ast} =o(2^k)$ a.s.~(meaning there are long diagonal highways close to the origin), and therefore  the asymptotic speed in any diagonal direction is $\sqrt{2}/1.4$, and similarly along an axis it is $1/0.9$.  

Observe that \eqref{TGlower} in the proof of Lemma \ref{lem:geodesics} is valid for all geodesics, as that is the only property of $\Gamma_{0x}(a_x,b_x)$ that is used.  Consider then the geodesic from $(0,0)$ (in place of $a_x$) to some point $(r,s)$ with $0\leq s\leq r$ (in place of $b_x$.)  It is easy to see that the right side of \eqref{TGlower} is not increased if we replace the geodesic with a path of $2s$ consecutive zigzag bonds (heading NE) and $r-s$ horizontal bonds heading east---effectively this means consolidating all zigzag bonds into a single highway. It follows that the passage time is 
\[
  \tau((0,0),(r,s)) \geq \left(.9(r-s) + 1.4s\right)(1+o(1)) \quad \text{as } b\to\infty.
\]
From the fact that there are, with high probability, long HV and zigzag highways close to any given point (reflected in the fact that $X_{*,k}=o(2^k)$ and $Y_{*,k}=o(2^k)$), we readily obtain the reverse inequality.  The linearity of the asymptotic expression $.9(r-s) + 1.4s$ means that the limit shape is flat between any diagonal and an adjacent axis.
It follows that the limit shape $\mB$ is an octogon, with vertex $(1/.9,0)$ on the horizontal axis, $(\sqrt{2}/1.4,\sqrt{2}/1.4)$ on the SW/NE diagonal, and symmetrically in other quadrants, proving Theorem~\ref{main}(i).

\smallskip
Item (iii).
Let $F$ be the facet of $B$ in the first quadrant between the horizontal axis and the main diagonal, 
and let $\rho_F$ be the linear functional equal to 1 on $F$. 
We have from Theorem~1.11 in \cite{DH14} that there is a semi-infinite geodesic $\Gamma_F$ with $\dir(\Gamma_F) \subset \{v/|v|:v\in F\}$, and Theorem~4.3, Corollary 4.7 and Proposition 5.1 of \cite{DH14} show that $\Gamma_F$ has Busemann function linear to $\rho_F$.
But all geodesics are directed in axis directions so we must have $\dir(\Gamma_F)=\{(1,0)\}$.
Let $\hF$ be the facet which is the mirror image of $F$ across the horizontal axis.  From lattice symmetry, we have $\dir(\Gamma_{\hF})=\{(1,0)\}$ and its Busemann function is linear to $\rho_{\hF}$.  Since the Busemann functions differ, we must have $\Gamma_F\neq\Gamma_{\hF}$.   This and lattice symmetry prove Theorem \ref{main}(iii).

\end{document}